\newtheorem{theorem}{Theorem}[section]
\newtheorem{lemma}[theorem]{Lemma}
\newtheorem{rem}[theorem]{Remark}
\newtheorem{assump}[theorem]{Assumptions}
\numberwithin{equation}{section}
\newcommand{\xrightharpoonup}[2][]{\ext@arrow 0359{%
\arrowfill@\relbar\relbar\rightharpoonup}{#1}{#2}}
\DeclareMathOperator{\DIV}{div}
\DeclareMathOperator{\T}{\mathcal{T}_\eps}
\DeclareMathOperator{\Tb}{\mathcal{T}_\eps^\mathrm{b}}
\DeclareMathOperator{\F}{\mathcal{F}_\eps}
\DeclareMathOperator{\Q}{\mathcal{Q}_\eps}
\DeclareMathOperator{\G}{\mathcal{G}_\eps}
\DeclareMathOperator{\wto}{\rightharpoonup}
\newcommand{\oo}{\Omega}
\newcommand{\rr}{\mathbb{R}}
\newcommand{\dd}{\,\mathrm{d}}
\newcommand{\eps}{\varepsilon}
\newcommand{\nablaY}{\nabla_{\!y}}
\newcommand{\nablaD}{\nabla^\delta}
\newcommand{\rmH}{\mathrm{H}}
\newcommand{\Hh}{\rmH^1(\oo)}
\newcommand{\rmL}{\mathrm{L}}
\newcommand{\wh}{\widehat}
\newcommand{\wt}{\widetilde}
\def\Xint#1{\mathchoice
{\XXint\displaystyle\textstyle{#1}}%
{\XXint\textstyle\scriptstyle{#1}}%
{\XXint\scriptstyle\scriptscriptstyle{#1}}%
{\XXint\scriptscriptstyle\scriptscriptstyle{#1}}%
\!\int}
\def\XXint#1#2#3{{\setbox0=\hbox{$#1{#2#3}{\int}$}
\vcenter{\hbox{$#2#3$}}\kern-.5\wd0}}
\def\dashint{\Xint-}
\begin{document}
\title{Corrector estimates for a thermo-diffusion model\\with weak thermal coupling}
\author{Adrian Muntean\footnote{Department of Mathematics and Computer Science, Karlstads Universitet, Sweden} \quad and \quad
Sina Reichelt\footnote{Weierstra\ss-Institut f\"ur Angewandte Analysis und Stochastik (WIAS), Berlin, Germany (corresponding author)}}
\maketitle

\begin{abstract}\noindent The present  work deals with the derivation of corrector estimates for the two-scale homogenization of a thermo-diffusion model with weak thermal coupling posed in a heterogeneous medium endowed with periodically arranged high-contrast microstructures. The terminology ``weak thermal coupling'' refers here to the variable scaling in terms of the small homogenization  parameter $\eps$ of the heat conduction-diffusion interaction terms, while  the ``high-contrast'' is thought  particularly in terms  of the heat conduction properties of the composite material. As main target, we justify the first-order terms of the multiscale asymptotic expansions in the presence of coupled fluxes, induced by the joint contribution of Sorret and Dufour-like effects. The contrasting heat conduction combined with cross coupling lead to the main mathematical difficulty in the system. Our approach relies on the method of periodic unfolding  combined with $\eps$-independent estimates for the thermal and concentration fields and for their coupled fluxes.   

\end{abstract}

{\footnotesize \textbf{MSC 2010: } 35B27, 35Q79, 74A15, 78A48.}

{\footnotesize \textbf{Keywords: } Homogenization, corrector estimates, periodic unfolding, gradient folding operator, perforated domain, thermo-diffusion, composite media.}

\section{Introduction}

This paper deals with the justification of the two-scale asymptotic expansions method applied to a thermo-diffusion problem arising in the context of transport of densities of hot colloids in media made of periodically-distributed microstructures. 
Following \cite{KAM2014}, we study a system of two coupled semi-linear parabolic equations, where the diffusivity for the concentration $u_\eps$ is of order $O(1)$ and for the temperature $\theta_\eps$ it is of order $O(\eps^2)$. Here $\eps > 0$ denotes the characteristic length scale of the underlying microstructure. We rigorously justify the expansions $u_\eps (x) \approx u(x) + \eps U(x,x/\eps)$ and $\theta_\eps(x) \approx \Theta(x,x/\eps)$ and prove an estimate of the type
\begin{align}
\label{main}
&\Vert \T u_\eps {-} u \Vert_{\rmL^\infty(0,T; \rmL^2(\oo\times Y_*))}
+ \Vert \T(\nabla u_\eps) {-} (\nabla u {+} \nablaY U) \Vert_{\rmL^2(0,T; \rmL^2(\oo\times Y_*))} \nonumber \\
& + \Vert \T \theta_\eps {-} \Theta \Vert_{\rmL^\infty(0,T; \rmL^2(\oo\times Y_*))}
+ \Vert \T(\eps\nabla \theta_\eps) {-} \nablaY \Theta \Vert_{\rmL^2(0,T; \rmL^2(\oo\times Y_*))}
\leq \sqrt{\eps}C ,
\end{align}
where $\oo \subset \rr^d$ denotes the macroscopic domain and $Y_* \subset [0,1)^d$ is the perforated reference cell. 
Estimate \eqref{main} basically gives a quantitative indication of the speed of the (two-scale) convergence between the unknowns of our problem and their limits, which is detailed in the forthcoming sections. This work follows up previous successful attempts of deriving quantitative corrector estimates using periodic unfolding; see e.g. \cite{Gri04,Gris05,OV07,FMP12,Diss-SR,Rei2016}. The unfolding technique allows for homogenization results under minimal regularity assumptions on the data and on the choice of allowed microstructures. The novelty we bring in here is the combination of three aspects: (i) the asymptotic procedure refers to a suitably perforated domain, (ii) presence of a cross coupling in gradient terms, and (iii) lack of compactness for $\theta_\eps$. Our working techniques combines $\eps$-independent a priori estimates for the solutions and periodic unfolding-based estimates such as the \emph{periodicity defect} in \cite{Gri04} and the \emph{folding mismatch} in \cite{Rei2016}. Estimate \eqref{main} improves existing convergence rates for semi-linear parabolic equations with possibly non-linear boundary conditions in \cite{FMP12} or small diffusivity in \cite{Diss-SR} from $\eps^{1/4}$ to $\eps^{1/2}$. 
This improvement is obtained by studying all equations in the two-scale space $\oo\times Y_*$ and by suitably rearranging and controlling occurring error terms $\Delta^\eps_{\mathrm{error}}$.

It is worth noting that the availability of corrector estimates for the thermo-diffusion system allows in principle the construction of rigorously convergent multiscale numerical methods (for instance based on MsFEM like in \cite{BLL14}) to capture thermo-diffusion effects in porous media. Interestingly, for the thermo-diffusion system posed in perforated domains such convergent multiscale numerical methods are yet unavailable.

\medskip
The paper is structured as follows: In Section \ref{sec:model}, we introduce the thermo-diffusion model and prove existence as well as a priori estimates for the solutions of the microscopic problem respective the two-scale limit problem. The periodic unfolding method and auxiliary corrector estimates are presented in Section \ref{subsec:two-scale} and \ref{subsec:auxiliary}, respectively. Finally, the corrector estimates in \eqref{main}  are proved in Section \ref{subsec:proof}. We conclude our paper with a discussion  in Section \ref{sec:discuss}.

\section{A thermo-diffusion model}
\label{sec:model}

\subsection{Model equations. Notation and assumptions}

We investigate a system of reaction-diffusion equations which includes mollified cross-diffusion terms and different diffusion length scales. The cross-diffusion terms are motivated by the incorporation of Soret and Dufour effects as outlined in \cite{KAM2014}. For more information on phenomenological descriptions of thermo-diffusion, we refer the reader to \cite{GrM84}. The concentrations of the transported species through the perforated domain $\oo_\eps$ are denoted by $u_\eps$, while  $\theta_\eps$ is the temperature. The overall interplay between transport and reaction is modeled here by the following system of partial differential equations:
\begin{equation}
\label{eq:prob-eps}
\begin{array}{rcll}
\dot{u}_\eps & = & \DIV (d_\eps \nabla u_\eps) + \tau \eps^\alpha \nabla u_\eps \cdot \nablaD \theta_\eps + R(u_\eps) & \text{ in } \oo_\eps \\
\dot{\theta}_\eps & = & \DIV(\eps^2 \kappa_\eps \nabla \theta_\eps) + \mu \eps^\beta \nabla \theta_\eps \cdot \nablaD u_\eps  & \text{ in } \oo_\eps 
\end{array}
\end{equation}
supplemented with the Neumann boundary conditions 
\begin{align}
\label{eq:prob-eps-bdry}
\begin{array}{lcll}
- d_\eps \nabla u_\eps \cdot \nu & = & \eps(a u_\eps + b v_\eps) & \text{ on } \partial T_\eps \\
- \eps^2 \kappa_\eps \nabla \theta_\eps \cdot \nu & = & \eps g \theta_\eps & \text{ on } \partial T_\eps \\
- d_\eps \nabla u_\eps \cdot \nu & = & 0 & \text{ on } \partial\oo_\eps \backslash \partial T_\eps \\
- \eps^2 \kappa_\eps \nabla \theta_\eps \cdot \nu & = & 0 & \text{ on } \partial\oo_\eps \backslash \partial T_\eps
\end{array}
\end{align}
and the initial conditions
\begin{align}
\label{eq:prob-eps-init}
u_\eps (0,x) = u^0_\eps(x) \quad\text{and}\quad \theta_\eps (0,x) = \theta^0_\eps(x), x\in   \oo_\eps.
\end{align}
First of all, it is important to note that the $\eps$-scaling for some of the terms in the system is variable with $\alpha, \beta \geq 0$. We refer to  the suitably scaled heat conduction-diffusion interaction terms $\eps^\alpha \nabla u_\eps \cdot \nablaD \theta_\eps$ and $\eps^\beta \nabla u_\eps \cdot \nablaD \theta_\eps$ as ``weak thermal couplings'', while  the ``high-contrast'' is thought here particularly in terms  of the heat conduction properties of the composite material that can be seen in $\eps^2 \kappa_\eps \nabla \theta_\eps$.
In this context, $\nu$ denotes the normal outer unit vector of $\oo_\eps$. The matrix $d_\eps $ is the diffusivity associated to the concentration of the (diffusive) species $u_\eps$, $\kappa_\eps$ is the heat conductivity, while $\tau_\eps:= \tau \eps^\alpha$ and  $\mu_\eps:=\mu \eps^\beta$ are the Soret and Dufour coefficients. Note that $d_\eps $, $\kappa_\eps$, $\tau$, and $\mu$ are either positive definite matrices, or they are positive real numbers.  Furthermore, the reaction term $R(\cdot)$ models the Smoluchovski interaction production.  
In  the original model from \cite{KAM2014}, the function $v_\eps$ is an additional unknown modeling the mass of deposited species on the pore surface $\Gamma_\eps$, and it is shown to possess the regularity $v_\eps \in \rmH^1(0,T; \rmL^2(\Gamma_\eps)) \cap \rmL^\infty((0,T) \times \Gamma_\eps)$. Here we assume $v_\eps$ as given data. 
We point out that the linear boundary terms are relevant for the regularity of solutions, but that they are not required to prove the convergence rate of order of $\sqrt{\eps}$ in \eqref{main}.

To deal with perforated domains we employ the method of periodic unfolding as presented in \cite{CDZ06}. Let $Y = [0,1)^d$ denote the standard unit-cell. We fix here and for all the following assumptions on the domain and the microstructure. 
\begin{assump}
\label{assump:domain} Our geometry is designed as follows:
\begin{compactenum}
\item[(i)] The domain $\oo = \prod_{i=1}^d \, [ 0, l_i )$ is a $d$-polytope with length $l_i > 0$ for all $1 \leq i \leq d$.
\item[(ii)] The reference hole $T \subseteq Y$ is an open Lipschitz domain and the perforated cell $Y_* := Y\backslash \overline{T}$ satisfies $Y_* \neq \emptyset$. Moreover $Y_*$ is a connected Lipschitz domain and $\partial Y_* \cap \partial Y$ is identical on all faces of $Y$ .
\end{compactenum}
\end{assump}
The set of all nodal points is given via $N_\eps := \lbrace \xi \in \mathbb{Z}^d \,|\, \eps(\xi + Y) \subseteq \oo \rbrace$. 
With this we define the pore part $T_\eps $ and the perforated domain $\oo_\eps$, which is connected, via
\begin{align}
\label{eq:def-domain}
T_\eps : = \bigcup_{\xi \in N_\eps} \eps(\xi + T)
\quad\text{and}\quad
\oo_\eps :=\bigcup_{\xi \in N_\eps} \eps(\xi + Y_*^\circ) ,
\end{align}
where $A^\circ$ denotes the interior of the set $A$. 
Both sets are open and form together the original domain  $\overline{\oo} = \overline{T}_\eps \cup \overline{\oo}_\eps$.   
\begin{figure}[t]
\label{fig:cell}
\centering
\includegraphics[width=5cm]{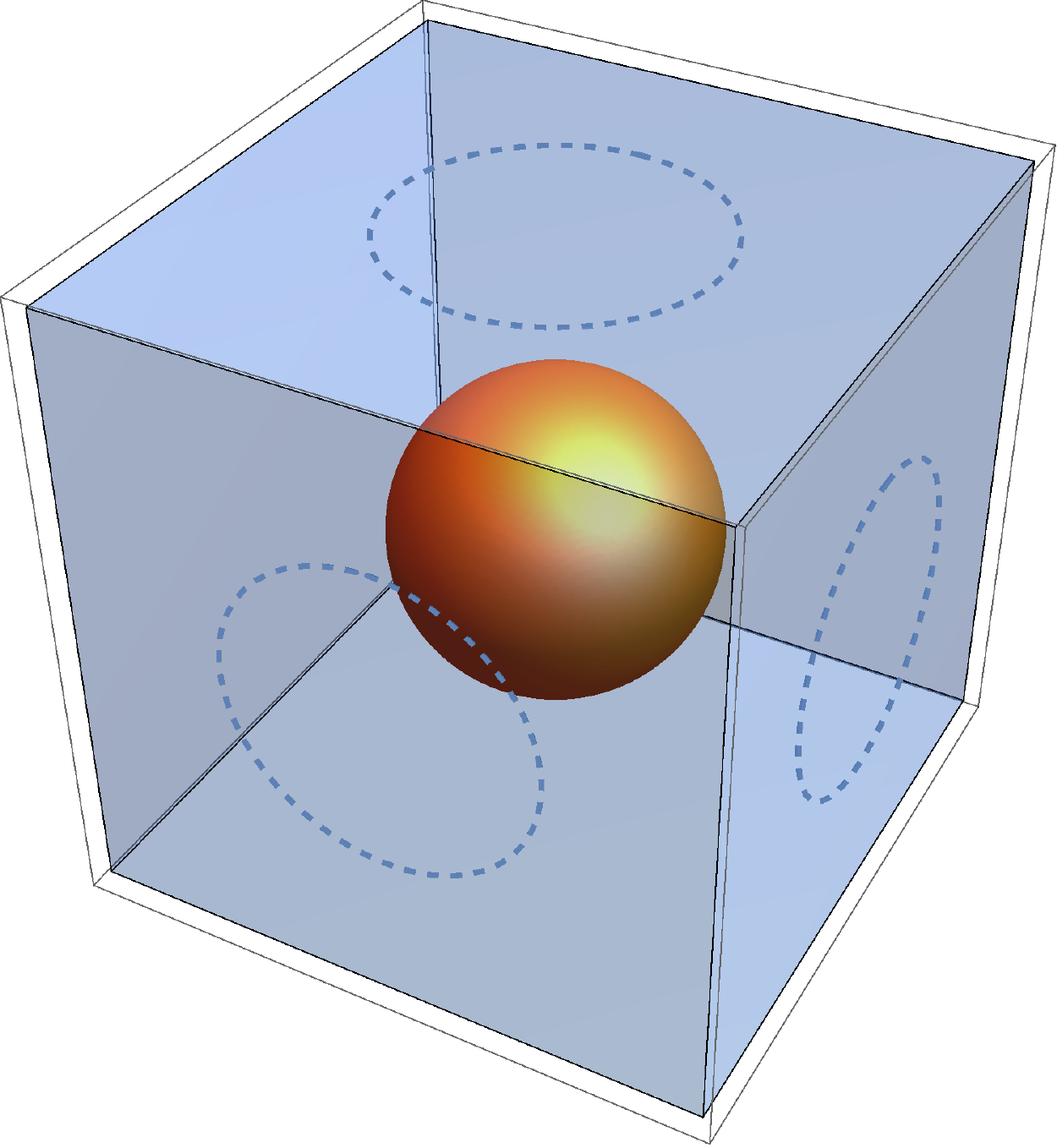}
\includegraphics[width=5cm]{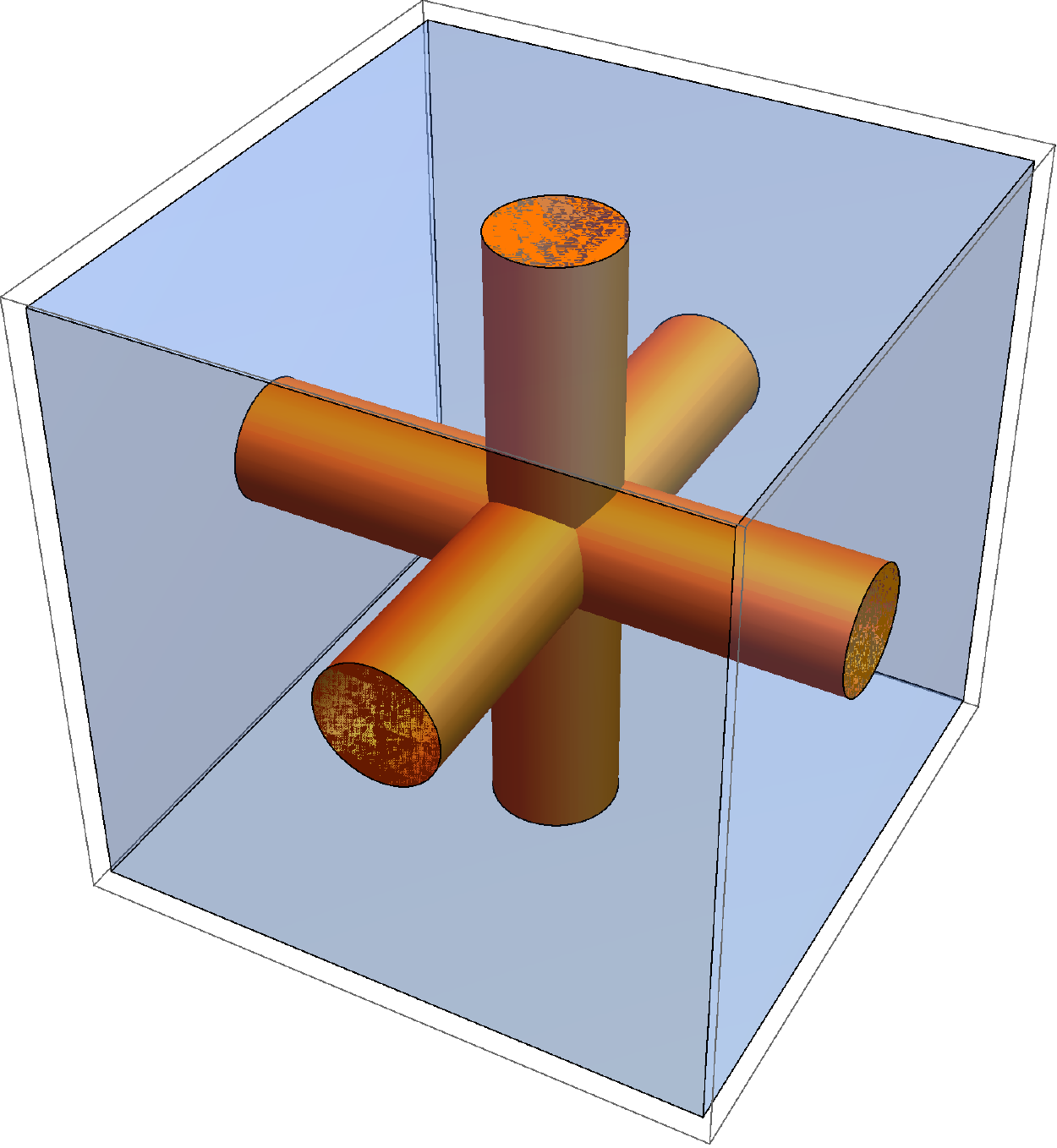}
\includegraphics[width=5cm]{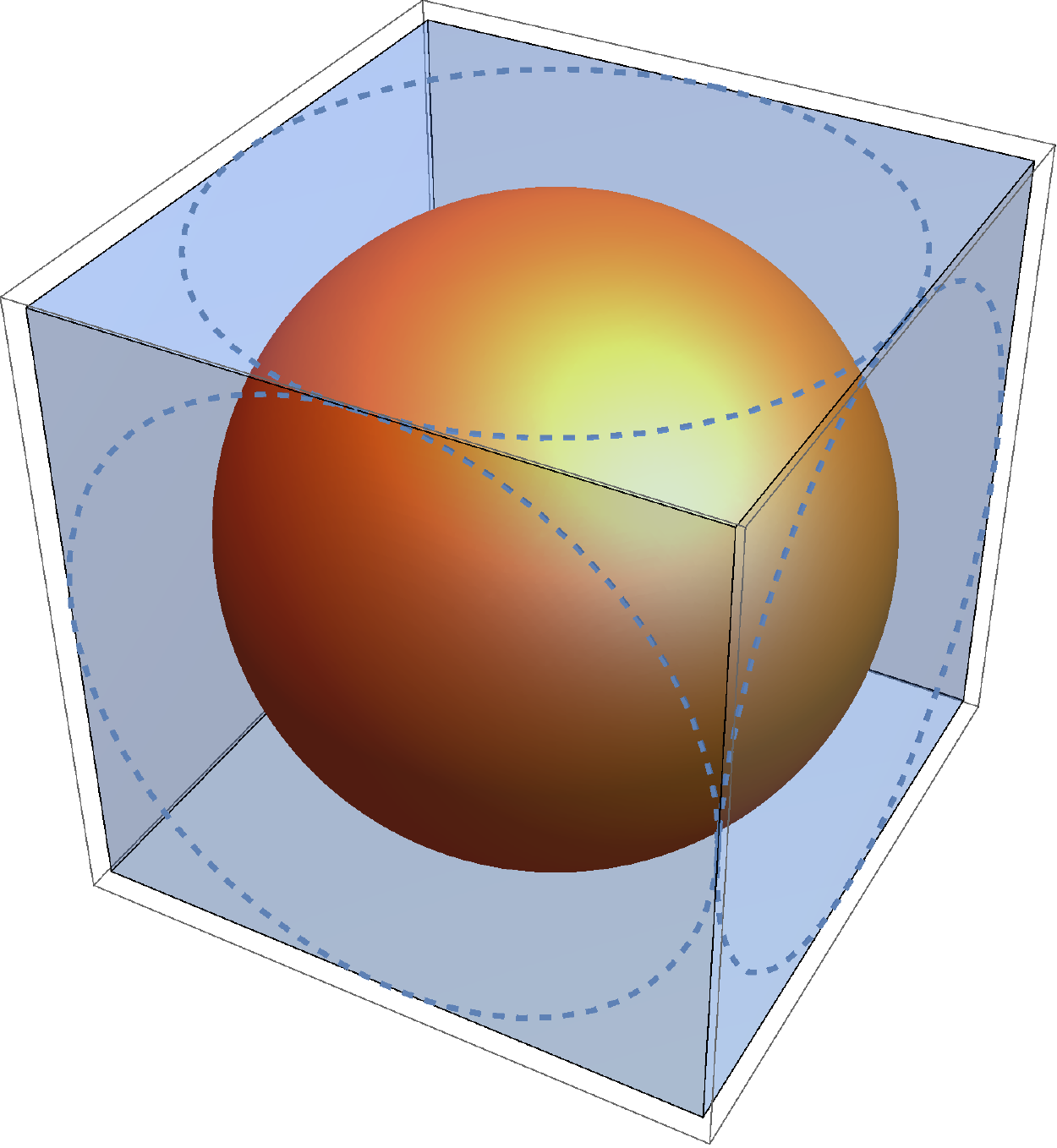}
\caption{Example of microstructures: (i) isolated inclusion; (ii) pipe structure; (iii) holes touching the boundary are not admissible.}
\end{figure}
The assumptions on the domain guarantee the existence of suitable extensions from $\oo_\eps$ to $\oo$ (cf.\ Theorem \ref{thm:extension}). Also traces exist and are well-defined on the boundaries $\partial \oo_\eps$ and $\partial T$. 
With this, perforated domains with isolated holes as well as the prominent ``pipe-model'' for porous media are included in our considerations, see Figure \ref{fig:cell}. The boundary of the perforated domain $\oo_\eps$ is given by $\partial\oo_\eps  = \left( \partial\oo \cup \partial T_\eps \right) \backslash \left( \partial\oo \cap \partial T_\eps \right)$. Indeed, intersected pore structures at the boundary $\partial\oo \cap \partial T_\eps \neq \emptyset$ as in Figure \ref{fig:cell}(ii) are not excluded.

\begin{rem}
\label{rem:boundary}
In the following we denote by $\eps$ a sequence $( \eps_n)_{n\in\mathbb{N}}$ of numbers satisfying $\eps_n^{-1} \in \mathbb{N}$. This implies that all microscopic cells $\eps(\xi + Y_*)$, for $\xi\in \mathbb{Z}^d$, are contained in $\oo_\eps$ and no intersected cells occur at the boundary $\partial\oo$. 

This assumption (tremendously) simplifies the presentation in this paper, however, we believe that the same results can be obtained for Lipschitz domains $\oo$ by considering a bigger $d$-polytope $\widetilde{\oo}_\eps$ with $\oo_\eps \subseteq \widetilde{\oo}_\eps$. Then, all relevant coefficients, functions, and solutions are suitably extended from $\oo_\eps$ to $\widetilde{\oo}_\eps$.
\end{rem}

\begin{assump}
\label{assump:data} We impose the following restrictions on the data:

\begin{compactenum}
\item[(i)]  The diffusion matrices $d_\eps$ and $\kappa_\eps$ are given via 
\begin{align*}
d_\eps(x) := \mathbb{D}( \tfrac{x}{\eps})
\quad\text{and}\quad
\kappa_\eps(x) := \mathbb{K}( \tfrac{x}{\eps}),
\end{align*}
where $\mathbb{D}, \mathbb{K} \in \rmL^\infty(Y_*; \rr^{d\times d}_\mathrm{sym})$ are symmetric and uniformly elliptic, i.e.\ 
\begin{align*}
\exists\, C_\mathrm{elip} > 0 ,\, \forall\, (\xi,y) \in \rr^d \times Y_* :\quad
C_\mathrm{elip} |\xi|^2 \leq \mathbb{D}(y) \xi \cdot \xi \leq C_\mathrm{elip}^{-1} |\xi|^2 .
\end{align*}
\item[(ii)]  The constants $\tau,\mu, a,b,g$ are non-negative.
\item[(iii)]  The reaction term $R: \rr \to \rr$ is globally Lipschitz continuous, i.e.\ 
\begin{align*}
\exists\, L>0, \forall\, s_1,s_2 \in \rr: \quad
|R(s_1) {-} R(s_2)| \leq L|s_1 {-} s_2| .
\end{align*}
Moreover, it is $R(s) = 0$ for all $s < 0$. 
\item[(iv)]  The sink/source term $v_\eps$ is given via $v_\eps(t,x) := \mathbb{V}(t,x,x/\eps)$ for any data $\mathbb{V} \in \mathrm{C}([0,T]; (\mathrm{W}^{1,\infty}(\oo; \rmL^2(\partial T)))$. 
\end{compactenum}
\end{assump}
Here we denote with $a \cdot b$ the scalar product of vectors in $\rr^d$ and set 
\begin{align*}
\rmL^\infty_+ (\oo) := \lbrace \varphi \in \rmL^\infty(\oo) \,|\, \varphi \geq 0 \text{ a.e.\ in } \oo\rbrace .
\end{align*}
For technical reasons, we introduce the mollified gradient $\nablaD$ which is given as follows: 
for $\delta>0$, we introduce the mollifier
\begin{align*}
J_\delta (x) := \left\lbrace
\begin{array}{ll}
C \exp(1/(|x|^2 - \delta^2)) \quad & \text{if } |x| < \delta ,\\
0 & \text{if } |x| \geq \delta ,
\end{array}\right.
\end{align*}
where the constant $C>0$ is selected such that $\int_{\rr^d} J_\delta \dd x = 1$. Using $J_\delta$ we define for $u \in \rmL^1(\rr^d)$ the mollified gradient
\begin{align*}
\nablaD u := \nabla \left[  \int_{B(x,\delta)} J_\delta(x {-}\xi) u(\xi) \dd \xi \right],
\end{align*}
where $B(x,\delta)$ denotes the ball centered at $x\in \rr^d$ with radius $\delta$. According to \cite[Sec.~C.4]{Eva98} there holds $\nablaD u\in \mathrm{C}^\infty(\oo)$ and 
\begin{align}
\label{eq:est-delta}
\exists\,C_\delta>0 , \forall\, u \in \rmL^2(\oo):\quad 
\Vert \nablaD u \Vert_{\rmL^\infty(\oo)} \leq C_\delta \Vert u \Vert_{\rmL^2(\oo)} .
\end{align}
We assume throughout this text that $\eps$ and $\delta$ are chosen such that $\delta > 2\eps\mathrm{diam}(Y)$ holds. This assumption arises in Lemma \ref{lemma:prop-T-2}. 

\subsection{Existence of solutions and {\em a priori} estimates}

Now, let us consider the case $\alpha {=} \beta {=} 1$. This subsection and the next one are devoted the existence of weak solutions to our target problem. 

\begin{theorem}
\label{thm:exist-eps}
Let the Assumptions \ref{assump:domain} and \ref{assump:data} hold and let the initial condition $(u_\eps^0, \theta_\eps^0)$ satisfy 
\begin{align*}
\exists\, C_0, M_0  >0 : \quad
& \Vert u_\eps^0 \Vert_{\rmH^1(\oo_\eps)} +
\Vert \theta_\eps^0 \Vert_{\rmL^2(\oo_\eps)} +
\eps \Vert \nabla \theta_\eps^0 \Vert_{\rmL^2(\oo_\eps)}
\leq C_0 , \\
& 0 \leq u_\eps^0(x), \theta_\eps^0(x) \leq M_0 \quad\text{for a.e.\ } x\in \oo_\eps .
\end{align*}
Then there exists for every $\eps>0$ a unique solution $(u_\eps,\theta_\eps)$ of \eqref{eq:prob-eps}--\eqref{eq:prob-eps-init} with
\begin{align*}
u_\eps, \theta_\eps \in \rmH^1(0,T; \rmL^2(\oo_\eps)) \cap \rmL^\infty(0,T; \rmH^1(\oo_\eps)) \cap \rmL^\infty_+((0,T)\times \oo_\eps) .
\end{align*}
Moreover the solution is non-negative, i.e.\ $0 \leq u_\eps, \theta_\eps \leq M$ almost everywhere in $[0,T]\times \oo_\eps$, and uniformly bounded
\begin{align}
\label{eq:sol-bound}
\begin{array}{l}
\Vert u_\eps \Vert_{\rmH^1(0,T; \rmL^2(\oo_\eps))} + 
\Vert \nabla u_\eps \Vert_{\rmL^\infty(0,T; \rmL^2(\oo_\eps))} \\
+ \Vert \theta_\eps \Vert_{\rmH^1(0,T; \rmL^2(\oo_\eps))} + 
\eps \Vert \nabla \theta_\eps \Vert_{\rmL^\infty(0,T; \rmL^2(\oo_\eps))} \leq C ,
\end{array}
\end{align}
where the constants $M,C>0$ are independent of $\eps$. 
\end{theorem}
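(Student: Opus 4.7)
The plan is to construct solutions via a Banach fixed-point argument in which the mollified gradient $\nablaD$ plays a crucial smoothing role. For a fixed pair $(\bar u,\bar\theta)\in \rmL^\infty(0,T;\rmL^2(\oo_\eps))^2$, inequality \eqref{eq:est-delta} ensures that $\nablaD\bar u,\nablaD\bar\theta$ are uniformly bounded in $\rmL^\infty((0,T)\times\oo)$, so the frozen-coefficient system
\begin{align*}
\dot u - \DIV(d_\eps\nabla u) = \tau\eps\,\nabla u\cdot\nablaD\bar\theta + R(u), \qquad
\dot\theta - \DIV(\eps^2\kappa_\eps\nabla\theta) = \mu\eps\,\nabla\theta\cdot\nablaD\bar u
\end{align*}
decouples into two linear (up to the globally Lipschitz $R$) parabolic equations with bounded drift and Robin boundary data \eqref{eq:prob-eps-bdry}. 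Each is solvable by a standard Faedo--Galerkin scheme. Contraction in $\calC([0,T_*];\rmL^2(\oo_\eps))^2$, for $T_*$ small, follows from $\|\nablaD(\bar\vartheta_1{-}\bar\vartheta_2)\|_\infty\le C_\delta\|\bar\vartheta_1{-}\bar\vartheta_2\|_{\rmL^2}$ combined with the Lipschitz continuity of $R$. Iterating up to $T$ is then legitimated by the a priori bounds derived next.

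\textbf{A priori estimates.} I would proceed in two layers. First, test the $u$-equation with $u_\eps$ and the $\theta$-equation with $\theta_\eps$. Using uniform ellipticity, the uniform-in-$\eps$ trace inequality on $\oo_\eps$ inherited from Assumption \ref{assump:domain}, and Young's inequality with weight $\|\nablaD\theta_\eps\|_\infty\le C_\delta\|\theta_\eps\|_{\rmL^2}$, the cross couplings are absorbed into diffusion plus quadratic remainders. A coupled Gronwall inequality for $\|u_\eps\|_{\rmL^2}^2+\|\theta_\eps\|_{\rmL^2}^2$ then yields $u_\eps,\theta_\eps\in\rmL^\infty_t\rmL^2\cap\rmL^2_t\rmH^1$ with the correct $\eps$-scaling in front of $\nabla\theta_\eps$. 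Second, test with $\dot u_\eps$ and $\dot\theta_\eps$: the diffusion terms produce $\tfrac12\tfrac{d}{dt}\!\int d_\eps\nabla u_\eps\!\cdot\!\nabla u_\eps\,\dd x$ and the analogous expression with $\eps^2\kappa_\eps$, while the Soret term is controlled via $|\tau\eps\!\int\!\nabla u_\eps\!\cdot\!\nablaD\theta_\eps\,\dot u_\eps\,\dd x|\le\tfrac14\|\dot u_\eps\|_{\rmL^2}^2+C\|\nabla u_\eps\|_{\rmL^2}^2$ thanks to the first-layer bound on $\|\theta_\eps\|_{\rmL^2}$. A second Gronwall yields \eqref{eq:sol-bound}.

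\textbf{Non-negativity, pointwise bound, uniqueness.} Non-negativity of $u_\eps,\theta_\eps$ follows from the parabolic comparison principle in each fixed-point iteration: $0$ is a subsolution because $R(0)\ge 0$, $v_\eps\ge 0$, $g\ge 0$ by Assumption \ref{assump:data}, and this property is preserved through the linear iteration. The upper bound $u_\eps,\theta_\eps\le M$ is obtained by testing with the truncations $(u_\eps-\overline M(t))^+$ and $(\theta_\eps-\overline M(t))^+$ against an exponentially growing barrier $\overline M(t)=\mathrm{e}^{Ct}(M_0+1)$ with $C$ depending only on $L$, $a$, $b$, $\|\mathbb V\|_{\rmL^\infty}$, and $C_\delta$ times the already-established $\rmL^\infty_t\rmL^2$ norms -- hence independent of $\eps$. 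Uniqueness is proved by subtracting two solutions, testing the differences with themselves, and invoking once more $\|\nablaD\cdot\|_\infty\le C_\delta\|\cdot\|_{\rmL^2}$ together with the Lipschitz property of $R$ to close by Gronwall.

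\textbf{Main obstacle.} I expect the delicate point to be the higher-order energy estimate in the presence of the Soret and Dufour couplings: the Dufour-like term $\mu\eps\,\nabla\theta_\eps\cdot\nablaD u_\eps$ involves the $\eps$-scaled gradient of $\theta_\eps$ and must be balanced against the $\eps^2$-diffusion without losing a power of $\eps$. The scalings $\alpha=\beta=1$, combined with the smoothing $\nablaD$, are precisely what reduce the coupling to a semilinear perturbation whose bookkeeping closes; without the mollifier one would need substantially more regularity or compactness.
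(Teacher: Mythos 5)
Your overall architecture (a fixed-point scheme in which the mollifier $\nablaD$ decouples the system, truncation/invariant-region arguments for $0\le u_\eps,\theta_\eps\le M$, and two layers of energy estimates closed by Gr\"onwall) is essentially the strategy of \cite{KAM2014}, which is all the paper invokes here: its proof of Theorem \ref{thm:exist-eps} is a citation of Lemmata 3.2--3.6 and Theorem 3.8 of \cite{KAM2014} with $\kappa_\eps$ and $\tau_\eps$ replaced by $\eps^2\kappa_\eps$ and $\eps\tau$, supplemented only by the identity \eqref{eq:sym-trick} needed to pass from scalar to symmetric matrix-valued coefficients and by the remark that the linearity of the boundary terms is what legitimates testing with $\dot u_\eps$, $\dot\theta_\eps$. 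So in spirit you are reconstructing the cited proof rather than taking a different route.

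There is, however, one step that would fail as written: the first-layer energy estimate. Testing with $u_\eps$ and $\theta_\eps$ and bounding the cross terms via $\Vert\nablaD\theta_\eps\Vert_{\rmL^\infty}\le C_\delta\Vert\theta_\eps\Vert_{\rmL^2}$ produces, after absorbing the gradient factors into the diffusion by Young's inequality, remainders of the form $C\Vert\theta_\eps\Vert^2_{\rmL^2}\Vert u_\eps\Vert^2_{\rmL^2}$ and $C\Vert u_\eps\Vert^2_{\rmL^2}\Vert\theta_\eps\Vert^2_{\rmL^2}$. These are \emph{quartic}, not quadratic, in the energy $y=\Vert u_\eps\Vert^2_{\rmL^2}+\Vert\theta_\eps\Vert^2_{\rmL^2}$, so the differential inequality you obtain is of Riccati type, $y'\le C(1+y)y$, which does not give a bound on all of $[0,T]$. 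Your $\rmL^\infty$ barrier then has nothing to feed on, since its constant is supposed to depend on the ``already-established'' $\rmL^\infty_t\rmL^2$ norms: the argument as ordered is circular. The repair --- and the order followed in \cite{KAM2014} --- is to establish the uniform bounds $0\le u_\eps,\theta_\eps\le M$ \emph{first}, propagating the invariant region through the fixed-point iteration where the frozen data lie in a fixed ball so that $\Vert\nablaD\bar u\Vert_{\rmL^\infty}$ and $\Vert\nablaD\bar\theta\Vert_{\rmL^\infty}$ are controlled by an absolute constant; only then do the energy estimates close linearly, because $\Vert\theta_\eps\Vert_{\rmL^2}\le M|\oo|^{1/2}$ turns the quartic remainders into quadratic ones. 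Two smaller omissions: in the second layer the identity $\tfrac{\dd}{\dd t}\int_{\oo_\eps} d_\eps\nabla u_\eps\cdot\nabla u_\eps\dd x=2\int_{\oo_\eps} d_\eps\nabla u_\eps\cdot\nabla\dot u_\eps\dd x$ requires the symmetry of $d_\eps$ (this is precisely \eqref{eq:sym-trick}), and the boundary contributions $\eps\int_{\partial T_\eps}(au_\eps+bv_\eps)\dot u_\eps\dd\sigma$ must be handled by integrating by parts in time using the assumed time-regularity of $v_\eps$ --- this is exactly where the linearity of the boundary condition enters and why the paper flags it.
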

\begin{proof}
The existence of solutions, non-negativity, and uniform boundedness follow from the Lemmata 3.2 --3.6 and Theorem 3.8 in \cite{KAM2014} by replacing $\kappa_\eps$ and $\tau_\eps$ with $\eps^2\kappa_\eps$ and $\eps\tau$, respectively. 
%\colR
Note that the proof can be generalized from diffusion coefficients $d_\eps, \kappa_\eps \in \rr$ to symmetric matrices as in Assumption \ref{assump:data}(i). In equation (35) respective (57) in \cite{KAM2014} it holds for $A \in \rmL^\infty(\oo;\rr^{d \times d}_\mathrm{sym})$ and $u \in \Hh$: 
\begin{align}
\label{eq:sym-trick}
\frac{\dd}{\dd t} \int_\oo A \nabla u \cdot \nabla u \dd x = 
\int_\oo A \nabla \dot{u} \cdot \nabla u \dd x +
\int_\oo A \nabla u \cdot \nabla \dot{u} \dd x \stackrel{A = A^T}{=} 
2  \int_\oo A \nabla \cdot \nabla \dot{u} \dd x .
\end{align}
This argumentation also requires linear boundary terms. Otherwise one has to argue as in \cite[Thm.~3.2]{Tem88} or \cite[Prop.~1]{MRT14} and differentiate the whole equation with respect to time and then use a second Gr\"onwall argument.
\end{proof}

\begin{rem}
Since our solutions are uniformly bounded in $\rmL^\infty((0,T) \times \oo_\eps)$, we may consider reaction terms with arbitrary growth as in \cite{KAM2014}. Also note that estimate \eqref{eq:sol-bound} remains valid for all $\alpha, \beta \geq 1$ and $\beta = 0$. 
\end{rem}

\subsection{The two-scale limit system}

For the parameters $\alpha {=} \beta {=} 1$,  we obtain in the limit $\eps\to0$ the following two-scale system
\begin{align}
\label{eq:prob-limit}
\begin{array}{rcll}
\dot{u} & = & \DIV( d_\mathrm{eff} \nabla u) + R (u) + \frac{|\partial T|}{|Y_*|} (a u + b v_0) & \text{ in } \oo \\
\dot{\Theta} & = & \DIV_{\!y} ( \mathbb{K} \nablaY \Theta) + \mu \nablaY \Theta \cdot \nablaD u  & \text{ in } \oo \times Y_*
\end{array}
\end{align}
supplemented with the boundary conditions 
\begin{align}
\label{eq:prob-limit-bdry}
\begin{array}{lcll}
- d_\mathrm{eff} \nabla u \cdot \nu & = & 0 & \text{ on } \partial\oo \\
- \mathbb{K} \nablaY \Theta \cdot \nu_{Y_*} & = & g \Theta & \text{ on }  \oo\times \partial T \\
\Theta \text{ is periodic } & & & \text{ on }  \oo \times \partial Y 
\end{array}
\end{align}
and the initial conditions
\begin{align}
\label{eq:prob-limit-init}
u(0,x) = u^0(x) \quad\text{and}\quad \Theta(0,x,y) = \Theta^0(x,y) .
\end{align}
Here $\nu$ and $\nu_{Y_*}$ denote the normal outer unit vector of $\oo$ and $Y_*$, respectively. 
To capture the oscillations in the limit we define the space of $Y$-periodic functions $\rmH^1_\mathrm{per}(Y_*) \subseteq \rmH^1_\mathrm{per}(Y)$ via
\begin{align}
\label{eq:space-per}
\rmH^1_\mathrm{per}(Y_*) := \left\lbrace \Phi \in \rmH^1(Y_*) \,|\, \Phi|_{\Gamma_i} = \Phi|_{\Gamma_{-i}} \right\rbrace ,
\end{align}
where $\Gamma_i$ and $\Gamma_{-i}$ are opposite faces of the unit cube $Y$ with $\partial Y = \bigcup_{i=1}^d \left( \Gamma_i \cup \Gamma_{-i} \right)$.
With this the effective coefficients are given via the standard unit-cell problem
\begin{align}
\label{eq:unit-cell-prob}
\forall\, \xi \in\rr^d :\quad
d_\mathrm{eff}\xi \cdot \xi = \min_{\Phi \in \rmH^1_\mathrm{per}(Y_*)} \int_{Y_*} \mathbb{D} [\nablaY \Phi {+} \xi] \cdot [\nablaY \Phi {+} \xi] \dd y .
\end{align}
Note that the integral is taken over $\int_{Y_*}$ and not the average $\dashint_{Y_*}$. In full, formula \eqref{eq:unit-cell-prob} reads $1/|Y| \int_{Y_*}$ with $|Y|=1$ here. For the boundary data $v_\eps$, we obtain in the limit $\eps\to0$ the usual average
\begin{align}
\label{eq:average}
\forall\, (t,x) \in [0,T]\times\oo:\quad
v_0(t,x) = \dashint_{\partial T} \mathbb{V}(t,x,y) \dd y .
\end{align}

Finally,  we state the existence and uniqueness of solutions for the limit system.
\begin{theorem}
\label{thm:exist-limit}
Let the Assumptions \ref{assump:domain} and \ref{assump:data} hold and let the initial value $(u^0, \Theta^0)$ satisfy 
$u^0 \in \rmH^1(\oo) \cap \rmL^\infty_+(\oo)$ and $\Theta^0 \in \rmL^2(\oo; \rmH^1_\mathrm{per}(Y_*)) \cap \rmL^\infty_+(\oo\times Y_*)$. 
There exists a unique solution $(u,\Theta)$ of \eqref{eq:prob-limit}--\eqref{eq:prob-limit-init} with 
\begin{align}
\label{eq:sol-bound-limit}
\begin{array}{l}
u \in \rmH^1(0,T; \rmL^2(\oo)) \cap \rmL^\infty(0,T; \rmH^1(\oo)) \cap \rmL^\infty_+((0,T)\times \oo) , \\
\Theta \in \rmH^1(0,T; \rmH^1(\oo; \rmL^2(Y_*))) \cap \rmL^\infty(0,T; \rmH^1(\oo; \rmH^1_\mathrm{per}(Y_*))) \cap \rmL^\infty_+((0,T)\times \oo\times Y_*) .
\end{array}
\end{align}
\end{theorem}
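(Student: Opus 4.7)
\proof[Proof sketch (plan)]
The strategy exploits a crucial structural feature of the limit system \eqref{eq:prob-limit}: the equation for $u$ does not depend on $\Theta$, so the system decouples in a triangular fashion. The plan is therefore to solve first for $u$ on $\Omega$, and then, with $u$ treated as given data, to solve for $\Theta$ on $\Omega\times Y_*$ viewed as a $y$-parabolic problem parameterized by $x$. The argument for each subproblem parallels Theorem \ref{thm:exist-eps}: Galerkin/Faedo-Galerkin approximation, energy estimates in the style of the symmetric identity \eqref{eq:sym-trick}, maximum-principle truncation for non-negativity and boundedness, and Gr\"onwall to close the estimates.

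For the $u$-equation I would treat it as a semi-linear parabolic equation on $\Omega$ with constant coefficient $d_\mathrm{eff}$, Lipschitz source $R(u)+\tfrac{|\partial T|}{|Y_*|}(au+bv_0)$ and homogeneous Neumann boundary conditions. Existence and uniqueness of $u\in \rmH^1(0,T;\rmL^2(\oo)) \cap \rmL^\infty(0,T;\Hh)$ follow from standard Galerkin theory (eigenbasis of the Neumann Laplacian); non-negativity is obtained by testing with the negative part $u^-$ and using $R(s)=0$ for $s<0$ together with $v_0\ge 0$; the $\rmL^\infty$-bound comes from testing with $(u-M)^+$ for $M$ large enough to dominate the Lipschitz data, exactly as in Lemmata~3.2--3.6 of \cite{KAM2014}.

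For the $\Theta$-equation, the key observation is that the coupling term reads $\mu\,\nablaY \Theta \cdot \nablaD u(t,x)$ where, by \eqref{eq:est-delta}, $\nablaD u \in \mathrm{C}([0,T]\times\overline{\oo})$ with $\rmL^\infty$-norm controlled by $\Vert u\Vert_{\rmL^2(\oo)}$. For almost every $x\in\oo$ this is a linear parabolic equation on $Y_*$ with a bounded drift, periodic boundary conditions on $\partial Y$, and a Robin condition on $\partial T$. I would again perform Galerkin approximation using a basis of $\rmH^1_\mathrm{per}(Y_*)$; the energy estimate is obtained by testing with $\Theta$, where the drift contribution $\mu\,\nablaD u \cdot \int_{Y_*}\nablaY(\Theta^2)\dd y$ is rewritten via integration by parts and periodicity on $\partial Y$, controlled on $\partial T$ by a trace inequality and absorbed through Young's inequality into the elliptic term, producing at worst a Gr\"onwall factor involving $\Vert \nablaD u\Vert_{\rmL^\infty}$. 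Testing with $\dot\Theta$ and applying the symmetric identity \eqref{eq:sym-trick} yields the $\rmL^\infty(0,T;\rmH^1(Y_*))$ and $\rmH^1(0,T;\rmL^2(Y_*))$ bounds. Non-negativity and the $\rmL^\infty$-bound follow pointwise in $x$ from truncation, using $g\ge 0$ on the Robin boundary.

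The genuinely delicate point is the $x$-regularity of $\Theta$ claimed in \eqref{eq:sol-bound-limit}, namely $\Theta\in \rmH^1(0,T;\rmH^1(\oo;\rmL^2(Y_*))) \cap \rmL^\infty(0,T;\rmH^1(\oo;\rmH^1_\mathrm{per}(Y_*)))$, because the PDE for $\Theta$ contains no $x$-derivatives. Here the mollifier structure of $\nablaD u$ is decisive: $\nablaD u$ is smooth in $x$ with $\Vert \nabla_{\!x}\nablaD u\Vert_{\rmL^\infty}$ controlled by $C_\delta\Vert u\Vert_{\rmL^2(\oo)}$. Differentiating the $\Theta$-equation formally in $x_i$ one obtains a linear problem for $\partial_{x_i}\Theta$ of the same structure with a source $\mu\,\nablaY\Theta\cdot \partial_{x_i}\nablaD u$ that lies in $\rmL^2(0,T;\rmL^2(\oo\times Y_*))$ by the already-established bound on $\nablaY\Theta$. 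A difference-quotient argument in $x$ (to make the formal differentiation rigorous) combined with the same energy estimate then yields the required bound, and the corresponding regularity in time follows from testing the differentiated equation with $\partial_{x_i}\dot\Theta$. Uniqueness of $(u,\Theta)$ is immediate from the triangular decoupling: $u$ is unique from the $u$-subproblem, and with $u$ fixed the $\Theta$-subproblem is linear so its solution is also unique. The main obstacle in executing this plan is precisely the transfer of $x$-regularity from the mollified data $\nablaD u$ to the two-scale field $\Theta$ via the difference-quotient procedure, since the standard parabolic machinery does not immediately cover $y$-only parabolic equations with parametric $x$-dependence in the coefficients.
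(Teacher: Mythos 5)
Your plan follows essentially the same route as the paper, whose proof of this theorem is a two-line citation: Galerkin approximation as in \cite{MNR2010} for existence, uniqueness and boundedness, with the higher $x$-regularity of $\Theta$ taken from \cite[Thm.~5]{MNR2010} and the $\rmL^\infty(0,T)$ gradient bounds and $\rmL^2(0,T)$ bounds on the time derivatives obtained via the symmetry identity \eqref{eq:sym-trick} and the linearity of the boundary terms, as in \cite{KAM2014}. Your explicit observation that the limit system is triangular --- the $u$-equation closes by itself, after which $\Theta$ solves a linear, $y$-parabolic problem with the bounded, $x$-smooth drift $\mu\nablaD u$ --- is correct and is precisely what makes that citation applicable; likewise the difference-quotient mechanism you propose for the $x$-regularity is the natural way to make \cite[Thm.~5]{MNR2010} self-contained here, since the only $x$-dependence in the $\Theta$-equation enters through $\nablaD u$ and $\Theta^0$, and $\Vert\partial_{x_i}\nablaD u\Vert_{\rmL^\infty(\oo)}$ is controlled by $C_\delta\Vert u\Vert_{\rmL^2(\oo)}$ in the spirit of \eqref{eq:est-delta}. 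The one genuine soft spot in your plan is the initial datum: the equation satisfied by the difference quotient $D_h^i\Theta$ carries the initial value $D_h^i\Theta^0$, so closing the $\rmL^\infty(0,T;\rmH^1(\oo;\cdot))$ estimate requires $\Theta^0\in\rmH^1(\oo;\rmH^1_\mathrm{per}(Y_*))$ (and the $\partial_{x_i}\dot\Theta$-test requires control of $\nablaY\partial_{x_i}\Theta^0$), whereas the hypothesis only gives $\Theta^0\in\rmL^2(\oo;\rmH^1_\mathrm{per}(Y_*))$; without the extra regularity your energy bounds degenerate as $t\to0$. This tension is already present in the statement itself (the conclusion $\Theta\in\rmH^1(0,T;\rmH^1(\oo;\rmL^2(Y_*)))$ forces $\Theta(0)\in\rmH^1(\oo;\rmL^2(Y_*))$, and Section \ref{sec:discuss} later works with $\Theta^0\in\rmH^1(\oo;\rmH^1_\mathrm{per}(Y_*))$), so you should either add that hypothesis or state that the $x$-regularity in time is understood away from $t=0$; apart from this, your outline is sound and matches the intended argument.
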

\begin{proof}
The existence and boundedness of unique solutions $(u,\Theta)$ follows by Galerkin approximation as in \cite{MNR2010}. In particular, the higher $x$-regularity of $\Theta$ follows by \cite[Thm.~5]{MNR2010}.
\end{proof}

\begin{rem}
By slightly modifying the proof of \cite[Thm.~4]{MNR2010} after equations (40)--(42), 
the assumptions on the initial values can be relaxed from $u^0 \in \rmH^2(\oo)$ and $\Theta^0 \in \rmL^2(\oo; \rmH^2_\mathrm{per}(Y_*))$ to $u^0 \in \rmH^1(\oo)$ and $\Theta^0 \in \rmL^2(\oo; \rmH^1_\mathrm{per}(Y_*))$. To prove the $\rmL^\infty(0,T)$-estimates for the gradients and the $\rmL^2(0,T)$-estimates for the time derivative we can argue as in \cite{KAM2014} by exploiting the symmetry of $d_\mathrm{eff}$ and $\mathbb{K}$ as in \eqref{eq:sym-trick} as well as the fact that the boundary terms are linear.
\end{rem}

\section{Corrector Estimates}

\subsection{Periodic unfolding and folding of two-scale functions}
\label{subsec:two-scale}

The usual two-scale decomposition is given via the mappings $[ \,\cdot\, ]: \rr^d \to \mathbb{Z}^d$ and $\lbrace\, \cdot \,\rbrace: \rr^d \to Y$. For $x \in \rr^d$, $[ x ]$ denotes the component-wise application of the standard Gauss bracket and $\lbrace x \rbrace := x - [x]$ is the remainder. With this, the \emph{periodic unfolding operator} $\T : \rmL^p(\oo_\eps) \to \rmL^p(\oo \times Y_*)$, for $1 \leq p \leq \infty$, is defined by (\cite[Def.~2.3]{CDZ06})
\begin{align}
\label{eq:def-T}
(\T u)(x,y) : = u \left( \eps[\tfrac{x}{\eps}] + \eps y \right) .
\end{align}
Note that we do not need to extend $u$ by $0$ outside $\oo_\eps$ since there occur no intersected cells at the boundary $\partial \oo$, cf.\ also Remark \ref{rem:boundary}. We have indeed $\eps ([\tfrac{x}{\eps}] + y) \in \oo_\eps$ for all $(x,y) \in \oo\times Y_*$ such that $\T$ is well-defined in \eqref{eq:def-T}. 
In the same manner we define the \emph{boundary unfolding operator} $\Tb : \rmL^p(\partial T_\eps) \to \rmL^p(\oo\times\partial T)$ by (\cite[Def.~5.1]{CDZ06})
\begin{align}
\label{eq:def-Tb}
(\Tb u)(x,y) : = u \left( \eps[\tfrac{x}{\eps}] + \eps y \right) .
\end{align}
Following \cite{CDZ06, MT07} we define the \emph{folding (averaging) operator} $\F : \rmL^2(\oo\times Y_*) \to \rmL^2(\oo_\eps)$ via 
\begin{align}
\label{eq:def-F}
(\F U)(x) : = \dashint_{\eps \left( \left[ \tfrac{x}{\eps} \right] + Y_* \right)} U\left( z, \lbrace\tfrac{x}{\eps}\rbrace \right) \dd z \;\bigg\vert_{\oo_\eps},
\end{align}
where $\dashint_A u \dd z = |A|^{-1} \int_A u \dd z$ denotes the usual average and $u|_{\oo_\eps}$ is the restriction of $u$ to $\oo_\eps$.

To derive quantitative estimates for the differences $u_\eps {-} u$ and $\theta_\eps {-} \Theta$, we need to test the weak formulation of the original system with $\rmH^1(\oo_\eps)$-functions which are one-scale pendants of the limiting solution $(u,\Theta)$. There are two options to naively fold a two-scale function $U(x,y)$, namely
\begin{align*}
u_\eps (x) = U(x,\tfrac{x}{\eps}) |_{\oo_\eps}
\quad\text{and}\quad
u^*_\eps (x) = (\F U)(x) .
\end{align*}
However $u_\eps$ is only well-defined in $\rmH^1(\oo_\eps)$, if at least $x \mapsto U(x,y)$ belongs to $\mathrm{C}^1(\oo)$, and our limit $(u,\Theta)$ (respective the corrector $U$ for $\nabla u$) does not satisfy strong differentiability in general. The second option $u^*_\eps$ is neither a suitable test function, since it is not $\rmH^1(\oo_\eps)$-regular. To overcome this regularity issue, we define the \emph{gradient folding operator} following \cite{MT07,Han11,MRT14,Rei2016} and adapt its definition to perforated domains. 

The gradient folding operator $\G : \rmL^2(\oo; \rmH^1_\mathrm{per}(Y_*)) \to \rmH^1(\oo_\eps)$ is defined as follows: for every $U \in \rmL^2(\oo; \rmH^1_\mathrm{per}(Y_*))$, the function $\G U := \wh u_\eps$ is given in $\rmH^1(\oo_\eps)$ as the solution of the elliptic problem
\begin{align}
\label{eq:def-G}
\int_{\oo_\eps} \left( \wh u_\eps - \F U \right) \varphi + (\eps \nabla \wh u_\eps - \F(\nablaY U)) \cdot \eps \nabla \varphi \dd x = 0 
\quad \text{for all } \varphi \in \rmH^1(\oo_\eps) .
\end{align}
Note that $\wh u_\eps$ is uniquely determined by the Lax--Milgram Lemma implying the well-definedness of $\G$. 
For simplicity, we define for $\varphi \in \Hh$ the norm
\begin{align}
\label{eq:norm-ident}
\Vert \varphi \Vert_\eps := \Vert \varphi \Vert_{\rmL^2(\oo_\eps)} + \eps \Vert \nabla \varphi \Vert_{\rmL^2(\oo_\eps)}
\quad\text{with}\quad
\Vert \T\varphi \Vert_{\rmL^2(\oo; \rmH^1(Y_*))} = \Vert \varphi \Vert_\eps ,
\end{align}
where the second identity follows from Lemma \ref{lemma:prop-T}. 
Both folding operators, $\F$ and $\G$, are linear and bounded operators satisfying
\begin{align*}
\Vert \F U \Vert_{\rmL^2(\oo_\eps)} \leq \Vert U\Vert_{\rmL^2(\oo\times Y_*)}
\quad\text{and}\quad
\Vert \G U \Vert_\eps \leq 2\Vert U \Vert_{\rmL^2(\oo; \rmH^1(Y_*))} ,
\end{align*}
where the first estimate is due to Jensen's inequality, while the second one is due to H\"older's inequality.

\subsection{Auxiliary corrector estimates}
\label{subsec:auxiliary}

We are now collecting several results which are essential ingredients in the proof of our error estimates \eqref{main}. 
Note that $u \in \Hh$ also belongs to the space $\rmH^1(\oo_\eps)$ since $\oo_\eps \subset \oo$ and we can apply the unfolding operator via $\T u := \T(\chi_\eps u)$, where $\chi_\eps$ denotes the characteristic function of the set $\oo_\eps$. For the sake of brevity $\chi_\eps$ is omitted in the following.

\begin{lemma}
\label{lemma:unfold-err}
For all $U \in \rmH^1(\oo; \rmL^2(Y_*))$ and $u \in \Hh$ we have
\begin{align*}
\Vert \T\F U {-} U \Vert_{\rmL^2(\oo\times Y_*)} \leq \eps \Vert U \Vert_{\rmH^1(\oo); \rmL^2(Y_*))}
\quad\text{and}\quad
\Vert \T u {-} u \Vert_{\rmL^2(\oo\times Y_*)} \leq \eps \Vert u \Vert_{\Hh} ,
\end{align*}
respectively, where $C>0$ only depends on the domains $\oo$ and $Y_*$.
\end{lemma}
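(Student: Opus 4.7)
The plan is to reduce both bounds to the standard Poincar\'e--Wirtinger inequality on each periodicity cube, using that the running convention $\eps^{-1}\in\nn$ (Remark \ref{rem:boundary}) yields the exact disjoint decomposition $\oo = \bigsqcup_{\xi\in N_\eps}\eps(\xi+Y)$, so no special treatment of boundary cells is required. The key analytic input is the cube inequality
\begin{align*}
\int_D\!\int_D |f(x)-f(z)|^2\dd z\dd x \;\leq\; C\,(\mathrm{diam}\,D)^2\,|D|\int_D |\nabla f|^2\dd x,
\end{align*}
valid on any convex domain $D\subset\rr^d$, which applied to $D=\eps(\xi+Y)$ produces exactly the desired factor $\eps^2$.

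For the first inequality, I would unpack the definitions of $\T$ and $\F$: for $x\in\eps(\xi+Y)$ and $y\in Y_*$, one has $[\xi+y]=\xi$ and $\{\xi+y\}=y$ since $y\in [0,1)^d$, so that $(\T\F U)(x,y) = \dashint_{\eps(\xi+Y_*)} U(z,y)\dd z$. Writing $U(x,y)$ as the trivial average of itself and using Jensen's inequality yields, pointwise in $y$,
\begin{align*}
\int_{\eps(\xi+Y)} |\T\F U - U|^2 \dd x \;\leq\; \frac{1}{|\eps(\xi+Y_*)|}\int_{\eps(\xi+Y_*)}\!\int_{\eps(\xi+Y)} |U(z,y)-U(x,y)|^2 \dd x\dd z.
\end{align*}
Enlarging $\eps(\xi+Y_*)\subset\eps(\xi+Y)$ in the outer integral and applying the cube inequality to $f=U(\cdot,y)$ produces a bound $C\eps^2\int_{\eps(\xi+Y)}|\nabla_x U(x,y)|^2\dd x$, where the ratio $|\eps(\xi+Y)|/|\eps(\xi+Y_*)|=1/|Y_*|$ is absorbed into $C$. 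Summing over $\xi\in N_\eps$ and integrating over $y\in Y_*$ delivers the first estimate.

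For the second inequality, the definition gives $(\T u)(x,y)=u(\eps\xi+\eps y)$ for $x\in\eps(\xi+Y)$. The change of variables $z=\eps\xi+\eps y$ (so $\dd y = \eps^{-d}\dd z$) maps $Y_*$ onto $\eps(\xi+Y_*)$ and rewrites
\begin{align*}
\int_{\eps(\xi+Y)}\!\int_{Y_*} |\T u - u|^2 \dd y\dd x \;=\; \eps^{-d}\int_{\eps(\xi+Y)}\!\int_{\eps(\xi+Y_*)} |u(z)-u(x)|^2 \dd z\dd x.
\end{align*}
Enlarging $\eps(\xi+Y_*)$ to $\eps(\xi+Y)$ and invoking the cube inequality for $u$ bounds the right-hand side by $C\eps^2\int_{\eps(\xi+Y)}|\nabla u|^2\dd x$; summing in $\xi$ concludes.

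I do not anticipate any serious obstacle: the whole argument is a routine cell-wise Poincar\'e--Wirtinger computation, and the correct scaling in $\eps$ is automatic from $\mathrm{diam}\,\eps(\xi+Y)=\eps\sqrt{d}$. Only the openness of $Y_*$ (to ensure $|Y_*|>0$) is used here; the connectedness and Lipschitz regularity of $Y_*$ from Assumption \ref{assump:domain} play no role in this lemma, though they become indispensable in subsequent steps involving averages over $Y_*$ or trace/extension arguments.
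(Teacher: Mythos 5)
Your argument is correct and is essentially the paper's own: both estimates reduce to the Poincar\'e--Wirtinger inequality applied cell-by-cell with the scaling $\mathrm{diam}\,\eps(\xi+Y)\sim\eps$, which is precisely what the paper delegates to \cite{Rei2016,Diss-SR}. The only cosmetic deviations are that you apply the inequality on the full convex cube $\eps(\xi+Y)$ rather than on the perforated cell $\eps(\xi+Y_*)$ (correctly observing that connectedness of $Y_*$ is then not needed here, since $U(\cdot,y)$ and $u$ are defined on all of $\oo$), and that you prove the second estimate directly by a change of variables instead of deducing it from the first via the intermediate quantity $\F u$ as the paper does.
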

\begin{proof}
The proof for the first estimate is based on the application of the Poincar\'e--Wirtinger inequality on each cell $\eps(\xi + Y_*)$, see \cite[Lem.~3.1]{Rei2016} or \cite[Lem.~2.3.4]{Diss-SR} with $\sigma=0$. The second estimate follows from the first one with 
\begin{align*}
\Vert \T u {-} u \Vert_{\rmL^2(\oo\times Y_*)} \leq
\Vert \T \F u {-} u \Vert_{\rmL^2(\oo\times Y_*)} + 
\Vert \F u {-} u \Vert_{\rmH^1(\oo_\eps)} ,
\end{align*} 
cf.\ also \cite[Eq.~(3.4)]{Gri04}. Note that $\F u$ is indeed well-defined for one-scale functions.
\end{proof}

To control the mollified gradient we prove:
\begin{lemma}
\label{lemma:prop-T-2}
For $\delta > 2\eps\mathrm{diam}(Y)$ and all $u \in \rmL^2(\oo)$ and $(x,y) \in \oo\times Y_*$ we have 
\begin{align}
\label{eq:unfold-nablaD}
\left\vert \T(\nablaD u) - \nablaD u \right\vert (x,y) \leq \sqrt{\eps} C_\delta \Vert u \Vert_{\rmL^2(\oo)} ,
\end{align} 
where $C_\delta > 0$ depends on the mollifier $J_\delta$ and $Y_*$.
\end{lemma}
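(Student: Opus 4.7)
The plan is to exploit the smoothness of $\nablaD u$ on all of $\rr^d$ that is built into the mollification. Differentiating under the integral sign, one has $\nablaD u(x) = \int_{\rr^d} \nabla J_\delta(x{-}\xi) u(\xi) \dd\xi$ and $\nabla(\nablaD u)(x) = \int_{\rr^d} D^2 J_\delta(x{-}\xi) u(\xi) \dd\xi$. Since $D^2 J_\delta$ is a smooth, compactly supported function (supported in $B(0,\delta)$), Cauchy--Schwarz gives a pointwise bound
$$
\sup_{x \in \rr^d} |\nabla(\nablaD u)(x)| \leq \|D^2 J_\delta\|_{\rmL^2(\rr^d)} \|u\|_{\rmL^2(\oo)} =: C_\delta \|u\|_{\rmL^2(\oo)},
$$
where $u$ is tacitly extended by zero outside $\oo$ and $C_\delta$ depends only on the mollifier. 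This is analogous to the bound \eqref{eq:est-delta} already recorded for the zeroth-order operator.

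Next I would unfold the expression: by definition \eqref{eq:def-T}, $\T(\nablaD u)(x,y) = (\nablaD u)(\eps[x/\eps] + \eps y)$, so with $z := \eps[x/\eps] + \eps y$ one has
$$
\left\vert \T(\nablaD u) - \nablaD u \right\vert (x,y) = \left| \nablaD u(z) - \nablaD u(x) \right|,
$$
and $|z{-}x| = \eps\,|y - \{x/\eps\}| \leq \eps\,\mathrm{diam}(Y)$ since both $y$ and $\{x/\eps\}$ lie in $[0,1)^d$. The hypothesis $\delta > 2\eps\,\mathrm{diam}(Y)$ ensures in particular that the entire segment from $x$ to $z$ sits in a region where the convolution $\nablaD u$ is defined in a consistent manner (the balls $B(x,\delta)$ and $B(z,\delta)$ have large overlap), so that the smoothness bound above applies uniformly along the segment.

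The concluding step is a standard mean value estimate along the segment $[x,z]$:
$$
|\nablaD u(z) - \nablaD u(x)| \leq |z{-}x| \sup_{\eta \in [x,z]} |\nabla(\nablaD u)(\eta)| \leq \eps\,\mathrm{diam}(Y)\, C_\delta \|u\|_{\rmL^2(\oo)}.
$$
Because $\eps \leq \sqrt{\eps}$ whenever $\eps \leq 1$, this in particular yields the advertised bound $\sqrt{\eps}\,C_\delta \|u\|_{\rmL^2(\oo)}$ once $\mathrm{diam}(Y)$ is absorbed into $C_\delta$.

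The routine work lies in bookkeeping the constant $C_\delta$ so that it genuinely depends only on the mollifier $J_\delta$ (and the geometry of $Y_*$, via the region in which $z$ is allowed to range). The only real subtlety is checking that the mollified gradient is genuinely well-defined at the shifted point $z$ even when $x$ sits near $\partial\oo$: this is precisely the purpose of the hypothesis $\delta > 2\eps\,\mathrm{diam}(Y)$, which guarantees that $z$ remains within the range where the convolution against $J_\delta$ is controlled by $\|u\|_{\rmL^2(\oo)}$ with the same constant as at $x$.
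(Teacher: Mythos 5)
Your argument is correct and in fact sharper than the paper's, but it follows a genuinely different route. The paper writes both $\T(\nablaD u)(x,y)$ and $(\nablaD u)(x)$ as convolution integrals against $\nabla J_\delta$, centred at the unfolded point $z=\eps[x/\eps]+\eps y$ and at $x$ respectively, and reduces their difference to an integral over the thin annulus $B(x,\delta+\eps\,\mathrm{diam}(Y))\setminus B(x,\delta-\eps\,\mathrm{diam}(Y))$; Cauchy--Schwarz then produces the factor $|B_\mathrm{diff}|^{1/2}\sim\sqrt{\eps}$, which is exactly where the rate $\sqrt{\eps}$ in \eqref{eq:unfold-nablaD} comes from and where the hypothesis $\delta>2\eps\,\mathrm{diam}(Y)$ enters (it keeps the annulus nondegenerate). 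You instead exploit the full smoothness of the mollification: the uniform bound $\Vert\nabla(\nablaD u)\Vert_{\rmL^\infty(\rr^d)}\leq\Vert D^2J_\delta\Vert_{\rmL^2(\rr^d)}\Vert u\Vert_{\rmL^2(\oo)}$ together with the mean value theorem along the segment $[x,z]$ of length at most $\eps\,\mathrm{diam}(Y)$. This yields the stronger rate $O(\eps)$, from which $O(\sqrt{\eps})$ follows trivially for $\eps\leq1$, and it dispenses with the hypothesis on $\delta$ altogether; your own explanation of why that hypothesis is needed is not quite right, since once $u$ is extended by zero the convolution $\nablaD u$ is a globally defined $\mathrm{C}^\infty$ function and your gradient bound holds at every point of $\rr^d$, near $\partial\oo$ or not. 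A further merit of your version is that it controls in one stroke the contribution from the overlap $B(x,\delta)\cap B(z,\delta)$, where the two kernels $\nabla J_\delta(x-\cdot)$ and $\nabla J_\delta(z-\cdot)$ differ; the paper's reduction to the annulus is terse on this term, and your second-derivative estimate is precisely the ingredient needed to handle it.
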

\begin{proof}
According to \cite[Thm.~6]{Eva98} we obtain for every $(x,y) \in \oo\times Y_*$
\begin{align*}
[\T(\nablaD u)](x,y)  
&= \left[\T \left( \int_{B(x,\delta)} \nabla J_\delta(x - \xi)u(\xi) \dd\xi \right) \right] (x,y) \\
%& = \frac{1}{\eps} \nabla_y \left( \T  \int_{B(x,\delta)} J_\delta(x - \xi)u(\xi) \dd\xi \right)
%= \T \left( \nabla \int_{B(x,\delta)} J_\delta(x - \xi)u(\xi) \dd\xi \right) \\
%& = \frac{1}{\eps} \nabla_y \left( \T  \int_{B(x,\delta)} J_\delta(x - \xi)u(\xi) \dd\xi \right) 
%= \frac{1}{\eps} \int_{B \left( \eps \left[\tfrac{x}{\eps} \right] + \eps y,\delta \right)} \nablaY J_\delta(\eps[\tfrac{x}{\eps}] {+} \eps y - \xi)u(\xi) \dd\xi \\
& = \int_{B \left( \eps \left[\tfrac{x}{\eps} \right] + \eps y,\delta \right)} \nabla J_\delta(\eps[\tfrac{x}{\eps}] {+} \eps y - \xi)u(\xi) \dd\xi. 
\end{align*}
For $\delta > 2\eps\mathrm{diam}(Y)$, we define the following $d$-dimensional annulus 
\begin{align*}
B_\mathrm{diff} := B(x, \delta {+} \eps \mathrm{diam}(Y)) \backslash B(x, \delta {-} \eps \mathrm{diam}(Y))
\end{align*}
of thickness $\eps 2\mathrm{diam}(Y)$ and with volume $| B_\mathrm{diff} | \leq \eps \mathrm{Const}(\delta,Y)$. We arrive at 
\begin{align*}
&\left\vert \T(\nablaD u) - \nablaD u \right\vert (x,y)  \\
& = \left\vert \int_{B \left( \eps \left[\tfrac{x}{\eps} \right] + \eps y,\delta \right)} \nabla J_\delta(\eps[\tfrac{x}{\eps}] {+} \eps y - \xi)u(\xi) \dd\xi 
- \int_{B(x,\delta)} \nabla J_\delta(x - \xi)u(\xi) \dd\xi \right\vert \\
& \leq \left\vert \int_{B_\mathrm{diff}} \nabla J_\delta(x - \xi)u(\xi) \dd\xi \right\vert 
\leq \Vert \nabla J_\delta \Vert_{\rmL^2(B_\mathrm{diff})} \Vert u\Vert_{\rmL^2(B_\mathrm{diff})} 
\leq \sqrt{\eps} C \Vert J_\delta \Vert_{\mathrm{C}^\infty(\rr^d)} \Vert u\Vert_{\rmL^2(\oo)}, 
\end{align*}
which proves the assertion. 
\end{proof}

Having defined two folding operators, $\F$ being dual to $\T$ and $\G$ assuring $\rmH^1$-regularity, we call  their difference \emph{folding mismatch} and control it as follows.

\begin{theorem}[Folding mismatch]
\label{thm:fold-mismatch}
For $C>0$ only depending on $\oo$ and $Y_*$ it holds
\begin{align}
%&\Vert \G(u,U) - u|_{\oo_\eps} \Vert_{\rmL^2(\oo_\eps)} + \Vert \nabla \G(u,U) - \F[\nabla u {+} \nablaY U ] \Vert_{\rmL^2(\oo_\eps)} \nonumber \\
%\label{eq:fold-est}
%& \hspace{225pt} \leq \eps C(\Vert u \Vert_{\rmH^1(\oo)} +\Vert U \Vert_{\rmH^1(\oo;\rmH^1(Y_*))}), \\
\label{eq:fold-est}
&\Vert \G U - \F U \Vert_{\rmL^2(\oo_\eps)} + \Vert \eps \nabla \G U - \F(\nablaY U ) \Vert_{\rmL^2(\oo_\eps)} \leq \eps C \Vert U \Vert_{\rmH^1(\oo;\rmH^1(Y_*))} . 
\end{align}
\end{theorem}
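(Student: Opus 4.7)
The defining weak identity \eqref{eq:def-G} is precisely the Euler--Lagrange equation of the strictly convex quadratic functional
\begin{align*}
J_\eps(\varphi) := \tfrac12\Vert \varphi - \F U \Vert_{\rmL^2(\oo_\eps)}^2 + \tfrac12\Vert \eps\nabla \varphi - \F(\nablaY U) \Vert_{\rmL^2(\oo_\eps)}^2
\end{align*}
on the Hilbert space $\rmH^1(\oo_\eps)$, so $\G U$ is its unique minimizer. Consequently, the squared left-hand side of \eqref{eq:fold-est} is bounded from above by $4 J_\eps(\wt U_\eps)$ for \emph{any} competitor $\wt U_\eps \in \rmH^1(\oo_\eps)$, and the proof reduces to exhibiting one for which $J_\eps(\wt U_\eps) \leq \eps^2 C^2 \Vert U\Vert_{\rmH^1(\oo;\rmH^1(Y_*))}^2$.

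For $U$ in the dense subclass $\mathrm{C}^\infty(\overline\oo;\mathrm{C}^\infty_\mathrm{per}(\overline Y_*))$ the natural one-scale ansatz is
\begin{align*}
\wt U_\eps(x) := U(x, \lbrace x/\eps\rbrace),
\end{align*}
which belongs to $\rmH^1(\oo_\eps)$ thanks to the $Y$-periodicity of $U$ in $y$ matching the traces of $\wt U_\eps$ on opposite faces of adjacent cells. To estimate $J_\eps(\wt U_\eps)$ I pass to the two-scale space by means of the isometry $\Vert \T(\cdot)\Vert_{\rmL^2(\oo;\rmH^1(Y_*))} = \Vert \cdot \Vert_\eps$ from \eqref{eq:norm-ident}, the identity $\T(\eps\nabla \varphi) = \nablaY \T\varphi$, and the explicit formulas $\T\wt U_\eps(x,y) = U(\eps[x/\eps]+\eps y,\,y)$ and $\T\F U(x,y) = \dashint_{\eps([x/\eps]+Y_*)} U(z,y)\dd z$. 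In this picture both $\T\wt U_\eps - U$ and $\T\F U - U$ differ by an $\eps$-sized shift in the $x$-variable or by a cell-averaging in $x$, so the classical translation estimate $\Vert U(\cdot+h,\cdot) - U\Vert_{\rmL^2(\oo\times Y_*)} \leq |h|\Vert \nabla_x U\Vert_{\rmL^2(\oo\times Y_*)}$ with $|h|\leq \eps\mathrm{diam}(Y)$, combined with Lemma \ref{lemma:unfold-err} applied to $\F U$ and to $\F(\nablaY U)$, controls each residual by $\eps\Vert U\Vert_{\rmH^1(\oo;\rmH^1(Y_*))}$. The gradient residual is handled analogously upon noting that $\T(\eps\nabla\wt U_\eps)(x,y) = \eps(\nabla_x U)(\eps[x/\eps]+\eps y,\,y) + (\nablaY U)(\eps[x/\eps]+\eps y,\,y)$, the first summand being already of order $\eps$ and the second one being an $\eps$-shift of $\nablaY U$ followed by cell-averaging.

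The main obstacle is the extension from smooth $U$ to a general $U \in \rmL^2(\oo;\rmH^1_\mathrm{per}(Y_*))$ with the $\rmH^1(\oo;\rmH^1(Y_*))$-norm on the right-hand side of \eqref{eq:fold-est}, since the pointwise slicing $y=\lbrace x/\eps\rbrace$ is a measure-zero operation and $\wt U_\eps$ is not a priori well-defined in that generality. I would close this gap by a density argument: the smooth subclass above is dense in the target space, and the maps $U\mapsto \G U$ and $U\mapsto (\F U, \F(\nablaY U))$ are continuous linear between the relevant Hilbert spaces, so the estimate \eqref{eq:fold-est}, being a linear inequality in the norms of $U$, extends to the closure by passing to the limit on both sides.
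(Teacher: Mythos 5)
Your variational framing is sound: \eqref{eq:def-G} is indeed the Euler--Lagrange equation of $J_\eps$, so $\G U$ minimizes it and any competitor bounds the left-hand side of \eqref{eq:fold-est} from above; this is essentially the comparison argument the paper runs (there one tests \eqref{eq:def-G} with $\G U - \vartheta_\eps$ for a suitable folded function $\vartheta_\eps$, which is the same mechanism). The gap is in the choice of competitor and in the belief that density repairs it. With $\wt U_\eps(x) = U(x,\{x/\eps\})$, every term of $J_\eps(\wt U_\eps)$ that you unfold is a \emph{diagonal trace}: for instance the summand you dismiss as ``already of order $\eps$'' has norm
\begin{align*}
\Vert \eps(\nabla_x U)(\eps[\tfrac{x}{\eps}]+\eps y, y)\Vert_{\rmL^2(\oo\times Y_*)} = \eps\,\Vert (\nabla_x U)(\cdot,\{\tfrac{\cdot}{\eps}\})\Vert_{\rmL^2(\oo_\eps)},
\end{align*}
and the quantity $\Vert f(\cdot,\{\cdot/\eps\})\Vert_{\rmL^2(\oo_\eps)}$ is \emph{not} bounded by $C\Vert f\Vert_{\rmL^2(\oo;\rmH^1(Y_*))}$ uniformly in $\eps$ when $d\geq 2$; controlling the trace on the oscillating diagonal requires something like $f\in\rmL^2(\oo;\mathrm{C}(\overline{Y}_*))$ or $\mathrm{C}^1$-regularity in $x$, which is exactly why the paper introduces $\G$ in the first place (see the discussion before \eqref{eq:def-G}). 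The same objection applies to $U(\eps[x/\eps]+\eps y,y)$ and to the $\nablaY U$-summand, so your ``translation estimate'' does not apply: the shift $\eps([x/\eps]+y)-x$ is not a fixed vector, and after the change of variables it reproduces the diagonal trace rather than an $\rmL^2$-translation. Consequently, what you can actually prove for smooth $U$ is an estimate whose right-hand side involves norms such as $\Vert \nabla_x U\Vert_{\rmL^2(\oo;\mathrm{C}(\overline{Y}_*))}$ that are neither dominated by nor continuous with respect to $\Vert U\Vert_{\rmH^1(\oo;\rmH^1(Y_*))}$. A density argument transfers an inequality only if it holds on the dense class \emph{with the final right-hand side and a universal constant}; continuity of both sides separately is not enough, so the last paragraph of your plan does not close the gap.

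The paper's proof circumvents precisely this obstruction. For product functions $U(x,y)=w(x)z(y)$ it replaces the naive fold by $\vartheta_\eps(x) = (\Q\wt w)(x)\,z(x/\eps)$, where $\Q\wt w$ is the $\mathcal{Q}_1$-interpolant of the cell averages of $w$. Because $\Q\wt w$ is a polynomial on each cell, inverse estimates decouple the $x$- and $y$-oscillations cell by cell, yielding $\Vert \Q\wt w\Vert_{\rmH^1(\oo_\eps)}\leq C\Vert w\Vert_{\Hh}$ in \eqref{eq:est-Q} and the interpolation error \eqref{eq:interpol-est} with only $w\in\Hh$ and $z\in\rmH^1_\mathrm{per}(Y_*)$; general $U$ is then handled by expanding in an orthonormal basis of $\rmH^1_\mathrm{per}(Y_*)$ and using the tensor-product structure of $\rmH^1(\oo;\rmH^1_\mathrm{per}(Y_*))$. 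If you want to keep your minimization framework, you must feed it this interpolated competitor (or an equivalent one) rather than $U(x,\{x/\eps\})$.
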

\begin{proof}
The proof is based on \cite[Sec.~3.2]{Rei2016} and adapted to perforated domains in Appendix \ref{app:AppendixB}.
\end{proof}

Since unfolded Sobolev functions $\T u \in \rmL^2(\oo; \rmH^1(Y_*)) \supsetneqq \rmL^2(\oo; \rmH^1_\mathrm{per}(Y_*))$ are in general not $Y$-periodic, we need to control the so-called \emph{periodicity defect}, cf.\ \cite{Gri04,Gris05}. In the case of slow diffusion it reads: 
\begin{theorem}[Periodicity defect I]
\label{thm:per-defect}
For every $\varphi \in \rmH^1(\oo_\eps)$, there exists a $Y$-periodic function $\Phi_\eps \in \rmL^2(\oo; \rmH^1_\mathrm{per}(Y_*))$ such that
\begin{align*}
\Vert \Phi_\eps \Vert_{\rmH^1(Y_*; \rmL^2(\oo))} \leq C \Vert \varphi \Vert_\eps 
\quad\text{and}\quad
\Vert \T  \varphi - \Phi_\eps \Vert_{\rmH^1(Y_*; \Hh^*)} \leq \sqrt{\eps} C \Vert \varphi \Vert_\eps  ,
\end{align*}
where the constant $C>0$ only depends in the domains $\oo$ and $Y_*$.
\end{theorem}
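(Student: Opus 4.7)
The plan is to adapt Griso's classical periodicity defect construction (\cite{Gri04,Gris05}) to our perforated, $\eps$-scaled setting by reducing to a result on the unperforated cell $Y$ and then restricting to $Y_*$. First, I would extend $\varphi \in \rmH^1(\oo_\eps)$ to $\wt\varphi \in \Hh$ via the extension operator of Theorem \ref{thm:extension}, noting that this extension is bounded in the scaled norm: $\Vert \wt\varphi\Vert_{\rmL^2(\oo)} + \eps \Vert \nabla \wt\varphi \Vert_{\rmL^2(\oo)} \leq C \Vert \varphi \Vert_\eps$, with the constant depending only on $Y_*$. This step ensures that from here on I can work on the whole cell $Y$ and ignore the perforation.

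Second, on the unperforated cell I would construct a $Y$-periodic representative $\wt\Phi_\eps \in \rmL^2(\oo; \rmH^1_\mathrm{per}(Y))$ of $\T\wt\varphi$. Concretely this is a cell-wise projection/averaging: on each macroscopic cell $\eps(\xi+Y)$ replace $\T\wt\varphi(x,\cdot)$ by a mean of its values on neighbouring cells, corrected so that opposite faces of $Y$ carry identical traces. Boundedness of this projection in $\rmH^1(Y;\rmL^2(\oo))$ gives the first claimed estimate directly via the norm identity \eqref{eq:norm-ident}. The second estimate — the defect in the dual norm $\rmH^1(Y;\Hh^*)$ — is obtained by testing against $\psi \in \Hh$ and rewriting the cell-wise jumps of $\T\wt\varphi$ across opposite faces as finite differences of the cell-mean $\F\wt\varphi$; an integration by parts shifts one derivative onto $\psi$, and an $\rmL^2$-trace inequality on $\partial Y$ summed over the $O(\eps^{-d})$ cells yields the $\sqrt{\eps}$ rate.

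Finally, setting $\Phi_\eps := \wt\Phi_\eps|_{Y_*}$ gives a function in $\rmL^2(\oo; \rmH^1_\mathrm{per}(Y_*))$, since by Assumption \ref{assump:domain}(ii) the matching of opposite faces $\partial Y_* \cap \partial Y$ is inherited from that of $\partial Y$. On $Y_*$ the unfolding $\T\wt\varphi$ agrees with $\T\varphi$ (because $\wt\varphi$ coincides with $\varphi$ on $\oo_\eps$), so both estimates pass to the restriction. The main obstacle is bookkeeping of the $\eps$-powers: the defect naturally scales like $\sqrt{\eps}$ only if one carefully pairs the $\rmL^2(\partial Y)$-trace of the face jump (which is $O(\eps \Vert \nabla \wt\varphi\Vert)$ after Taylor expansion in the $x$-variable) against a test gradient $\nabla \psi \in \rmL^2(\oo)$, and if one redistributes the $\eps$ appearing in $\Vert \varphi\Vert_\eps = \Vert \varphi\Vert_{\rmL^2(\oo_\eps)} + \eps\Vert \nabla\varphi\Vert_{\rmL^2(\oo_\eps)}$ between the $\sqrt{\eps}$ gain and the factor $\eps$ sitting in the gradient term. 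This $\eps$-accounting, rather than the perforation, is what makes the proof delicate in the present slow-diffusion framework.
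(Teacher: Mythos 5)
Your proposal is correct and follows essentially the same route as the paper: extend $\varphi$ from $\oo_\eps$ to $\Hh$ via Theorem \ref{thm:extension}, obtain the periodic approximant on the full cell $Y$ (the paper simply cites \cite[Thm.~2.2]{Gris05} where you sketch its internal face-jump/trace argument), and restrict to $Y_*$ using that $\T\varphi$ agrees with the unfolding of the extension there. Your explicit remark that the extension must be bounded in the $\eps$-scaled norm $\Vert\cdot\Vert_\eps$ is a point the paper leaves implicit, and it is indeed satisfied by the cell-wise extension operators of Theorem \ref{thm:extension}.
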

\begin{proof}
The proof relies on \cite[Thm.\,2.2]{Gris05} which we can apply after suitably extending $\varphi$ from the perforated domain $\oo_\eps$ to the whole domain $\oo$. 
Let $\wt \varphi \in \Hh$ denote the extension of $\varphi$ as in Theorem \ref{thm:extension}. According to \cite[Thm.\,2.2]{Gris05}, there exists a two-scale function $\wh \Phi_\eps \in \rmL^2(\oo; \rmH^1_\mathrm{per}(Y))$ satisfying
\begin{align*}
&\Vert \wh \Phi_\eps \Vert_{\rmH^1(Y; \rmL^2(\oo))} \leq C \Vert \wt\varphi \Vert_{\eps,\oo} 
\quad\text{and}\quad
\Vert \T^\mathrm{G}  \wt \varphi - \wh \Phi_\eps \Vert_{\rmH^1(Y; \Hh^*)} \leq \sqrt{\eps} C \Vert \wt \varphi \Vert_{\eps,\oo} , \\
&\text{where}\quad
\Vert w \Vert_{\eps,\oo} := \Vert w \Vert_{\rmL^2(\oo)} + \eps \Vert \nabla w \Vert_{\rmL^2(\oo)} ,
\end{align*}
with $C>0$ only depending on $\oo$, $Y$, and $\T^\mathrm{G} : \rmL^2(\oo) \to \rmL^2(\oo\times Y)$ defined on the whole unit-cell as in \eqref{eq:def-T}, cf.\ also \cite{Gri04,Gris05}. Note that it holds $\T \varphi = (\T^\mathrm{G} \wt \varphi) |_{\oo\times Y_*}$. Recalling the definition of $\rmH^1_\mathrm{per}(Y_*)$ in \eqref{eq:space-per} with $Y_* \subseteq Y$, we define $\Phi_\eps \in \rmL^2(\oo; \rmH^1_\mathrm{per}(Y_*))$ via $\Phi_\eps := \wh \Phi_\eps|_{\oo\times Y_*}$, which gives
\begin{align*}
\Vert \Phi_\eps \Vert_{\rmH^1(Y_*; \rmL^2(\oo))} 
\leq \Vert \wh \Phi_\eps \Vert_{\rmH^1(Y; \rmL^2(\oo))} 
,\quad
\Vert \T \varphi -  \Phi_\eps \Vert_{\rmH^1(Y_*; \Hh^*)} 
\leq \Vert \T^\mathrm{G}  \wt \varphi - \wh \Phi_\eps \Vert_{\rmH^1(Y; \Hh^*)} 
\end{align*}
and the proof is finished. 
\end{proof}

For the case of classical diffusion, we consider $\varphi \in \Hh$ instead of $\rmH^1(\oo_\eps)$. This is related to the fact that, in the limit system, the $u$-equation is given in the macroscopic domain $\oo$, whereas the $\Theta$-equation as posed in the two-scale space $\oo\times Y_*$, and hence, it cannot be reduced to $\oo$ only.

\begin{theorem}[Periodicity defect II]
\label{thm:per-defect-2}
For every $\varphi \in \Hh$, there exists a $Y$-periodic function $\Phi_\eps \in \rmL^2(\oo; \rmH^1_\mathrm{per}(Y_*))$ such that
\begin{align*}
\Vert \Phi_\eps \Vert_{\rmH^1(Y_*; \rmL^2(\oo))} \leq C \Vert \varphi \Vert_{\Hh}
\quad\text{and}\quad
\Vert \nabla \varphi {+} \nablaY \Phi_\eps  - \T(\nabla \varphi) \Vert_{\rmL^2(Y_*; \Hh^*)} \leq \sqrt{\eps} C \Vert \varphi \Vert_{\Hh}  ,
\end{align*}
where the constant $C>0$ only depends in the domains $\oo$ and $Y_*$.
\end{theorem}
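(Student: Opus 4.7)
The plan is to reduce to the standard Griso-type periodicity defect estimate on the full (non-perforated) unit cell $Y$ and then restrict to $Y_*$. This mirrors the strategy used in the proof of Theorem \ref{thm:per-defect}, with one simplification: since by Remark \ref{rem:boundary} no cells intersect $\partial\oo$, any $\varphi \in \Hh$ can be unfolded directly via the whole-cell unfolding operator $\T^\mathrm{G}$ without the extension step needed in the previous theorem.

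Concretely, I would invoke the classical Griso periodicity defect estimate for $\rmH^1(\oo)$-functions (see \cite[Thm.~2.3]{Gris05}), which produces a corrector $\wh \Phi_\eps \in \rmL^2(\oo; \rmH^1_\mathrm{per}(Y))$ satisfying
\begin{align*}
\Vert \wh \Phi_\eps \Vert_{\rmH^1(Y; \rmL^2(\oo))} \leq C \Vert \varphi \Vert_{\Hh}
\quad\text{and}\quad
\Vert \nabla \varphi {+} \nablaY \wh \Phi_\eps - \T^\mathrm{G}(\nabla \varphi) \Vert_{\rmL^2(Y; \Hh^*)} \leq \sqrt{\eps}\, C \Vert \varphi \Vert_{\Hh},
\end{align*}
with $\T^\mathrm{G}: \rmL^2(\oo) \to \rmL^2(\oo\times Y)$ the unfolding on the entire cell, defined analogously to \eqref{eq:def-T}, and $C>0$ depending only on $\oo$ and $Y$. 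I would then set $\Phi_\eps := \wh \Phi_\eps|_{\oo\times Y_*}$. By Assumption \ref{assump:domain}(ii), $\partial Y_* \cap \partial Y$ is identical on all faces of $Y$, so the $Y$-periodicity of $\wh \Phi_\eps$ transfers to the pairs of opposite faces of $Y$ intersected with $Y_*$, giving $\Phi_\eps \in \rmL^2(\oo; \rmH^1_\mathrm{per}(Y_*))$ in the sense of \eqref{eq:space-per}. Using the identity $\T(\nabla \varphi) = (\T^\mathrm{G}(\nabla \varphi))|_{\oo\times Y_*}$, restriction from $Y$ to $Y_*$ only decreases the norms on the left-hand sides above, yielding both claimed inequalities.

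The main obstacle I anticipate is selecting the correct variant of the Griso defect estimate: in contrast with Theorem \ref{thm:per-defect}, here the defect must be measured in the $\rmL^2(Y;\Hh^*)$-norm rather than an $\rmH^1(Y;\Hh^*)$-norm, and it must involve the microscopic gradient $\T^\mathrm{G}(\nabla \varphi)$ rather than $\T^\mathrm{G}\varphi$ itself, with constant depending only on $\oo$ and $Y$. Once the appropriate classical estimate is cited, the rest of the argument is a purely geometric restriction step justified by the admissibility conditions placed on the perforation in Assumption \ref{assump:domain}.
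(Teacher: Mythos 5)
Your proposal is correct and follows essentially the same route as the paper: cite the Griso periodicity-defect estimate for gradients \cite[Thm.~2.3]{Gris05} to get $\wh\Phi_\eps \in \rmL^2(\oo;\rmH^1_\mathrm{per}(Y))$ on the full cell, then restrict to $\oo\times Y_*$, where the norms on the left-hand sides can only decrease and the periodicity survives by Assumption \ref{assump:domain}(ii). Your additional observations (no extension step needed since $\varphi$ already lies in $\Hh$, and $\T(\nabla\varphi)=(\T^\mathrm{G}(\nabla\varphi))|_{\oo\times Y_*}$) match the paper's implicit reasoning.
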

\begin{proof}
For $\varphi \in \Hh$ the desired estimates hold with $\wh\Phi_\eps \in \rmL^2(\oo; \rmH^1_\mathrm{per}(Y))$ according to \cite[Thm.\,2.3]{Gris05}. Choosing $\Phi_\eps = \wh \Phi_\eps|_{\oo\times Y_*}$ as in the proof of Theorem \ref{thm:per-defect} yields the assertion.
\end{proof}

\subsection{Main Theorem and its proof}
\label{subsec:proof}

Having collected all preliminaries, we can now state and prove the corrector estimates for our thermo-diffusion model. 
\begin{theorem}
\label{thm:main}
Let $(u_\eps,\theta_\eps)$ and $(u,\Theta)$ denote the unique solution of $(\mathrm{P}_\eps)$ and $(\mathrm{P}_0)$, respectively, according to Theorem \ref{thm:exist-eps} and  Theorem \ref{thm:exist-limit}. 
If the initial values satisfy
\begin{align}
\label{eq:initial-est}
\exists\, C_0> 0:\quad \Vert \T u_\eps^0 {-} u^0 \Vert_{\rmL^2(\oo\times Y_*)}
+\Vert \T \theta_\eps^0 {-} \Theta^0 \Vert_{\rmL^2(\oo\times Y_*)}
\leq \sqrt{\eps} C_0, 
\end{align}
then we have 
\begin{align}
\label{eq:error-est}
&\Vert \T u_\eps {-} u \Vert_{\rmL^\infty(0,T; \rmL^2(\oo\times Y_*))}
+ \Vert \T(\nabla u_\eps) {-} (\nabla u {+} \nablaY U) \Vert_{\rmL^2(0,T; \rmL^2(\oo\times Y_*))} \nonumber \\
& + \Vert \T \theta_\eps {-} \Theta \Vert_{\rmL^\infty(0,T; \rmL^2(\oo\times Y_*))}
+ \Vert \T(\eps\nabla \theta_\eps) {-} \nablaY \Theta \Vert_{\rmL^2(0,T; \rmL^2(\oo\times Y_*))}
\leq \sqrt{\eps}C,
\end{align}
where the constant $C>0$ depends on the given data and the norms in \eqref{eq:sol-bound} and \eqref{eq:sol-bound-limit}. 
\end{theorem}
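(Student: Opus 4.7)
The plan is to compare $(u_\eps,\theta_\eps)$ with one-scale reconstructions of the two-scale limit $(u,\Theta)$ that are admissible as test functions in the weak formulation of $(\mathrm{P}_\eps)$. Let $U(x,y):=\sum_{i=1}^d\partial_{x_i}u(x)\,\chi_i(y)\in\rmL^2(\oo;\rmH^1_\mathrm{per}(Y_*))$ be the first-order corrector obtained from $\nabla u$ via the cell problem \eqref{eq:unit-cell-prob}, and I would set
\begin{align*}
\hat u_\eps:=u|_{\oo_\eps}+\eps\,\G U\in\rmH^1(\oo_\eps),\qquad \hat\theta_\eps:=\G\Theta\in\rmH^1(\oo_\eps).
\end{align*}
Combining the folding mismatch (Theorem~\ref{thm:fold-mismatch}), the unfolding error (Lemma~\ref{lemma:unfold-err}), and the periodicity defects (Theorems~\ref{thm:per-defect}--\ref{thm:per-defect-2}), a routine computation gives that $\T\hat u_\eps-u$, $\T(\nabla\hat u_\eps)-(\nabla u+\nablaY U)$, $\T\hat\theta_\eps-\Theta$ and $\T(\eps\nabla\hat\theta_\eps)-\nablaY\Theta$ are all of size $\sqrt\eps$ in the norms appearing in \eqref{eq:error-est}. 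Hence it is enough to establish the same rate for the one-scale differences $w_\eps:=u_\eps-\hat u_\eps$ and $z_\eps:=\theta_\eps-\hat\theta_\eps$; the hypothesis \eqref{eq:initial-est} yields $\Vert w_\eps(0)\Vert_{\rmL^2(\oo_\eps)}+\Vert z_\eps(0)\Vert_{\rmL^2(\oo_\eps)}\le\sqrt\eps\,C$.

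Next I would plug $w_\eps$ and $z_\eps$ as test functions into the weak form of $(\mathrm{P}_\eps)$ and subtract the corresponding limit identities obtained by testing $(\mathrm{P}_0)$ with a suitable $\Hh$-extension of $w_\eps$ and with the periodic reconstruction of $z_\eps$ furnished by Theorem~\ref{thm:per-defect}. Rewriting the fluxes of $\hat u_\eps,\hat\theta_\eps$ through the defining identity \eqref{eq:def-G} of $\G$ and exploiting the symmetry trick \eqref{eq:sym-trick} together with uniform ellipticity, I expect two coupled energy inequalities of the schematic form
\begin{align*}
\tfrac12\tfrac{\dd}{\dd t}\Vert w_\eps\Vert^2_{\rmL^2(\oo_\eps)}+C_\mathrm{elip}\Vert\nabla w_\eps\Vert^2_{\rmL^2(\oo_\eps)}&\le \Delta^\eps_u+L\Vert w_\eps\Vert^2_{\rmL^2(\oo_\eps)}+\mathcal{C}_u+\mathcal{B}_u,\\
\tfrac12\tfrac{\dd}{\dd t}\Vert z_\eps\Vert^2_{\rmL^2(\oo_\eps)}+C_\mathrm{elip}\eps^2\Vert\nabla z_\eps\Vert^2_{\rmL^2(\oo_\eps)}&\le \Delta^\eps_\theta+\mathcal{C}_\theta+\mathcal{B}_\theta,
\end{align*}
where $\Delta^\eps_\bullet$ are flux-replacement residuals quantifying how far $d_\eps\nabla\hat u_\eps$ and $\eps^2\kappa_\eps\nabla\hat\theta_\eps$ sit from the homogenized fluxes, $\mathcal{C}_\bullet$ collect the cross-coupling contributions, and $\mathcal{B}_\bullet$ are the Neumann boundary terms on $\partial T_\eps$, which I would rewrite via $\Tb$ and control using Assumption~\ref{assump:data}(iv) together with a trace inequality. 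By the folding mismatch, the periodicity defects, and the a priori bounds \eqref{eq:sol-bound},\eqref{eq:sol-bound-limit}, each of $\Delta^\eps_\bullet$ and $\mathcal{B}_\bullet$ is bounded by $\sqrt\eps\,C$ times a Sobolev norm of $w_\eps$ or $z_\eps$, after which Young's inequality absorbs the gradient pieces into the ellipticity.

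The hardest part is the cross-coupling $\mathcal{C}_u,\mathcal{C}_\theta$. The plan is to split
\begin{align*}
\tau\eps\nabla u_\eps\cdot\nablaD\theta_\eps-\tau\eps\nabla\hat u_\eps\cdot\nablaD\hat\theta_\eps&=\tau\eps\nabla w_\eps\cdot\nablaD\theta_\eps+\tau\eps\nabla\hat u_\eps\cdot\nablaD z_\eps,\\
\mu\eps\nabla\theta_\eps\cdot\nablaD u_\eps-\mu\eps\nabla\hat\theta_\eps\cdot\nablaD\hat u_\eps&=\mu(\eps\nabla z_\eps)\cdot\nablaD u_\eps+\mu(\eps\nabla\hat\theta_\eps)\cdot\nablaD w_\eps
\end{align*}
and test the resulting pieces against $w_\eps$ and $z_\eps$ respectively. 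Since the $\rmL^\infty$-bound \eqref{eq:est-delta} transforms each $\nablaD(\cdot)$ of an $\rmL^2$-bounded quantity into a uniform constant, Young's inequality will absorb $\Vert\nabla w_\eps\Vert^2$ and $\eps^2\Vert\nabla z_\eps\Vert^2$ into the ellipticity; the remaining cross terms, notably $\int\mu(\eps\nabla\hat\theta_\eps)\cdot\nablaD w_\eps\,z_\eps$, are controlled by combining $\Vert\eps\nabla\hat\theta_\eps\Vert_{\rmL^2}\le C$ with $\Vert\nablaD w_\eps\Vert_{\rmL^\infty}\le C_\delta\Vert w_\eps\Vert_{\rmL^2}$, producing $\Vert w_\eps\Vert\,\Vert z_\eps\Vert$-type contributions amenable to Gronwall. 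Summing the two energy inequalities and applying Gronwall with the $\sqrt\eps$-small initial datum produces
\begin{align*}
&\Vert w_\eps\Vert^2_{\rmL^\infty(0,T;\rmL^2(\oo_\eps))}+\Vert\nabla w_\eps\Vert^2_{\rmL^2(0,T;\rmL^2(\oo_\eps))}\\
&\quad+\Vert z_\eps\Vert^2_{\rmL^\infty(0,T;\rmL^2(\oo_\eps))}+\eps^2\Vert\nabla z_\eps\Vert^2_{\rmL^2(0,T;\rmL^2(\oo_\eps))}\le\eps\,C,
\end{align*}
which, via the $\T$-isometry underlying \eqref{eq:norm-ident} together with the $O(\sqrt\eps)$ approximation estimates of the first step, delivers \eqref{eq:error-est}.
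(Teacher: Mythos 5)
Your overall architecture is the one the paper uses: the same reconstructions $u|_{\oo_\eps}+\eps\G U$ and $\G\Theta$ built from the gradient folding operator, the same three error families (folding mismatch, periodicity defect, unfolding error), the same splitting of the cross-diffusion terms, and a final Gr\"onwall argument. The one genuinely different choice --- running the energy estimate on the one-scale differences $w_\eps=u_\eps-\hat u_\eps$ and $z_\eps=\theta_\eps-\G\Theta$ in $\rmL^2(\oo_\eps)$ rather than on the unfolded differences $\T u_\eps-u$ and $\T\theta_\eps-\Theta$ in $\rmL^2(\oo\times Y_*)$ --- is where a real gap appears. Your inequalities start from $\tfrac12\tfrac{\dd}{\dd t}\Vert w_\eps\Vert^2=\int(\dot u_\eps-\dot u-\eps\,\partial_t\G U)\,w_\eps$ and $\tfrac12\tfrac{\dd}{\dd t}\Vert z_\eps\Vert^2=\int(\dot\theta_\eps-\partial_t\G\Theta)\,z_\eps$, i.e.\ the time derivative lands on the folded correctors. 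Since $\G$ commutes with $\partial_t$ only in the sense $\partial_t\G V=\G\dot V$, you need $\dot U\in\rmL^2(\oo;\rmH^1(Y_*))$ (hence $\dot u\in\rmL^2(0,T;\Hh)$, because $U$ is built from $\nabla u$) and $\dot\Theta\in\rmL^2(\oo;\rmH^1_{\mathrm{per}}(Y_*))$; but Theorem \ref{thm:exist-limit} only gives $\dot u\in\rmL^2(0,T;\rmL^2(\oo))$ and $\dot\Theta\in\rmL^2(0,T;\rmH^1(\oo;\rmL^2(Y_*)))$, so $\G\dot\Theta$ is not even well defined (the definition \eqref{eq:def-G} requires $\F(\nablaY\dot\Theta)$). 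The paper avoids this entirely by keeping $\eps\G U$ and $\G\Theta$ only inside the \emph{test functions} and running Gr\"onwall on $\Vert\T u_\eps-u\Vert^2$ and $\Vert\T\theta_\eps-\Theta\Vert^2$; the time derivative then only ever multiplies the folding-mismatch term $\Theta-\T\G\Theta$ (respectively $u-\T u+\eps\T\G U$), which is $O(\eps)$ in $\rmL^2$ and is paired with the $\rmL^2$-bounded $\dot\theta_\eps$, $\dot u_\eps$. Your argument can be repaired the same way (equivalently: Gr\"onwall on $\Vert u_\eps-u\Vert_{\rmL^2(\oo_\eps)}$ and $\Vert\theta_\eps-\F\Theta\Vert_{\rmL^2(\oo_\eps)}$, moving the $\eps\G U$ and $\G\Theta-\F\Theta$ pieces into the residuals), but as written the two energy identities you posit are not available at the stated regularity.

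Two smaller points. First, your opening claim that $\T(\nabla\hat u_\eps)-(\nabla u+\nablaY U)$ is small via Theorem \ref{thm:fold-mismatch} needs $U\in\rmH^1(\oo;\rmH^1(Y_*))$, i.e.\ $u(t)\in\rmH^2(\oo)$; the paper secures this at the outset by elliptic regularity on the convex polytope $\oo$ (Grisvard), and you should state it, since without it the folding-mismatch estimate you invoke does not apply. Second, the replacement of $\nablaD u$ by its unfolded version in the $\mu$-term is only $O(\sqrt\eps)$ in $\rmL^\infty$ (Lemma \ref{lemma:prop-T-2}); this is the step that caps the overall rate at $\sqrt\eps$ and deserves explicit mention, whereas the folding/unfolding errors you list are actually $O(\eps)$.
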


\begin{proof}
Note that the domain $\oo$ is convex, bounded, and has a Lipschitz boundary. Since $\dot{u}$ and $v_0$ belong to the space $\rmL^2((0,T)\times \oo)$, we can apply \cite[Thm.~3.2.1.3]{Grisvard1985} and obtain that the limit $u(t,\cdot)$ belongs to the better space $\rmH^2(\oo)$. 

If not stated otherwise, the following notion of weak formulation is to be understood pointwise in $[0,T]$.

\noindent\textbf{Part A: Slow diffusion.} 
Note that for $u_\eps \in \rmH^1(\oo_\eps)$ and $u\in\Hh$ the following two norms are equivalent up to an error of order $O(\eps)$, i.e.\
\begin{align}
\label{eq:norm-diff-eps}
\left\vert \Vert \T u_\eps {-} u \Vert_{\rmL^2(\oo\times Y_*)} 
- \Vert u_\eps {-} u \Vert_{\rmL^2(\oo_\eps)} \right\vert
\leq \eps C \Vert u\Vert_{\Hh} ,
\end{align}
which is due to $\Vert \T u {-} u \Vert_{\rmL^2(\oo\times Y_*)} \leq \eps C \Vert u\Vert_{\Hh}$ by Lemma \ref{lemma:unfold-err}.

\emph{Step 1: Reformulation of $\theta_\eps$-equation.} The weak formulation of the $\theta_\eps$-equation reads 
\begin{align}
\int_{\oo_\eps} \dot{\theta}_\eps \psi \dd x = \int_{\oo_\eps} {-} \kappa_\eps \eps\nabla \theta_\eps \cdot \eps\nabla \psi + \mu \eps \nabla \theta_\eps \cdot \nablaD u_\eps \psi \dd x 
+ \int_{\partial T_\eps} \eps g \theta_\eps \psi \dd\sigma 
\end{align}
for all admissible test functions $\psi \in \rmH^1(\oo_\eps)$. Applying the periodic unfolding operators $\T$ and $\Tb$, with $\T \kappa_\eps = \mathbb{K}$, and exploiting their properties in Lemma \ref{lemma:prop-T} and \ref{lemma:prop-T-2} gives 
\begin{align}
\label{eq:slow-eps-1}
\int_{\oo \times Y_*} \T\dot{\theta}_\eps \T\psi \dd x \dd y 
&= \int_{\oo\times Y_*} {-} \mathbb{K} \nablaY (\T \theta_\eps) \cdot \nablaY (\T\psi) 
+ \mu \nablaY (\T \theta_\eps) \cdot \T(\nablaD u_\eps) \T\psi \dd x \nonumber \\
&\quad + \int_{\oo\times\partial T} g \Tb\theta_\eps \Tb\psi \dd x\dd\sigma(y)  .
\end{align}

We choose $\psi := \theta_\eps {-} \G \Theta$ in \eqref{eq:slow-eps-1}, which is by construction of the gradient folding operator $\G$ an admissible test function in $\rmH^1(\oo_\eps)$  so that
\begin{align}
\label{eq:slow-eps-2}
&\int_{\oo \times Y_*} \T\dot{\theta}_\eps \T(\theta_\eps {-} \G \Theta) \dd x \dd y \nonumber \\
& = \int_{\oo\times Y_*} {-} \mathbb{K} \nablaY (\T \theta_\eps) \cdot \nablaY [\T (\theta_\eps {-} \G \Theta)] 
+ \mu \nablaY (\T\theta_\eps) \cdot \T(\nablaD u_\eps) \T(\theta_\eps {-} \G\Theta) \dd x \dd y \nonumber \\
&\quad + \int_{\oo\times\partial T} g \Tb\theta_\eps \Tb(\theta_\eps {-} \G \Theta) \dd x\dd\sigma(y) .
\end{align}
Adding $\pm \Theta$ respective $\pm \nablaY\Theta$ gives 
\begin{align}
\label{eq:slow-eps-3}
&\int_{\oo \times Y_*} \T\dot{\theta}_\eps (\T\theta_\eps {-} \Theta) \dd x \dd y \nonumber \\
& = \int_{\oo\times Y_*} {-} \mathbb{K} \nablaY (\T \theta_\eps) \cdot \nablaY (\T \theta_\eps {-}  \Theta) 
+ \mu \nablaY (\T\theta_\eps) \cdot \T(\nablaD u_\eps) (\T\theta_\eps {-} \Theta) \dd x \dd y \nonumber \\
&\quad + \int_{\oo\times\partial T} g \Tb\theta_\eps (\Tb\theta_\eps {-} \Theta) \dd x\dd\sigma(y)
+ \Delta^{\theta_\eps}_\mathrm{fold} ,
\end{align}
where the folding mismatch $\Delta^{\theta_\eps}_\mathrm{fold}$ reads
\begin{align}
\Delta^{\theta_\eps}_\mathrm{fold} 
&:= \int_{\oo\times Y_*} \big\lbrace \T\dot{\theta}_\eps (\T\G\Theta {-} \Theta)
{-} \mathbb{K} \nablaY (\T \theta_\eps) \cdot \nablaY ( \Theta {-} \T\G \Theta) \nonumber \\
& \hspace{8em}  + \mu \nablaY (\T\theta_\eps) \cdot \T(\nablaD u_\eps) (\Theta {-} \T\G\Theta) \big\rbrace \dd x \dd y \nonumber \\
&\quad + \int_{\oo\times\partial T} g \Tb\theta_\eps (\Theta {-} \Tb\G \Theta) \dd x\dd\sigma(y) .
\end{align}
To treat the boundary term,  we exploit the continuous embedding $\rmL^2(\oo; \rmH^1(Y_*)) \subset \rmL^2(\oo\times\partial T)$, i.e.\ 
\begin{align}
\label{eq:emb}
\exists\, C_\mathrm{emb} > 0 , \forall\, \Psi \in \rmL^2(\oo; \rmH^1(Y_*)): \quad
\Vert \Psi \Vert_{\rmL^2(\oo\times\partial T)} 
\leq C_\mathrm{emb} \Vert \Psi \Vert_{\rmL^2(\oo; \rmH^1(Y_*))} .
\end{align}
Using the $\nablaD$-estimates in \eqref{eq:est-delta} 
as well as the boundedness of the solution $(u_\eps,\theta_\eps)$ in \eqref{eq:sol-bound}, 
in particular, the improved time-regularity $\Vert \dot{\theta}_\eps \Vert_{\rmL^2((0,T)\times\oo_\eps)} < \infty$  and $\Vert \T \theta_\eps \Vert_{\rmL^2(\oo; \rmH^1(Y_*))} 
= \Vert \theta_\eps \Vert_\eps$ by \eqref{eq:norm-ident}, gives
\begin{align*}
\int_0^T |\Delta^{\theta_\eps}_\mathrm{fold} | \dd t 
\leq C \Vert \T\G \Theta {-} \Theta \Vert_{\rmL^2((0,T)\times\oo; \rmH^1(Y_*))} . 
\end{align*}
Inserting $\pm \F \Theta$ respective $\pm \F(\nablaY \Theta)$, applying the triangle inequality, and using the norm preservation of $\T$ gives 
\begin{align*}
\Vert \T\G \Theta {-} \Theta \Vert_{\rmL^2(\oo; \rmH^1(Y_*))} 
& \leq \Vert \G \Theta {-} \F\Theta\Vert_{\rmL^2(\oo_\eps)} + \Vert \eps \nabla (\G \Theta) {-} \F(\nabla_y\Theta)\Vert_{\rmL^2(\oo_\eps)} \\
& \quad + \Vert \T\F\Theta {-} \Theta\Vert_{\rmL^2(\oo\times Y_*)} + \Vert \T\F(\nablaY\Theta) {-} \nablaY\Theta\Vert_{\rmL^2(\oo\times Y_*)} ..
\end{align*}
Using the higher $x$-regularity $\Theta \in \rmL^2(0,T; \rmH^1(\oo; \rmH^1_\mathrm{per}(Y_*)))$, applying Proposition \ref{thm:fold-mismatch} for the folding mismatch, and Lemma \ref{lemma:unfold-err}  for the unfolding error gives 
\begin{align}
\int_0^T |\Delta^{\theta_\eps}_\mathrm{fold} | \dd t \leq O(\eps). 
\end{align}

\emph{Step 3: Reformulation of $\Theta$-equation.} The weak formulation of the $\Theta$-equation reads 
\begin{align}
\int_{\oo\times Y_*} \dot{\Theta} \Psi \dd x\dd y 
= \int_{\oo\times Y_*} {-} \mathbb{K} \nablaY \Theta \cdot \nablaY \Psi + \mu \nablaY \Theta \cdot \nablaD u \Psi \dd x\dd y 
+ \int_{\oo\times\partial T} g \Theta \Psi \dd x \dd\sigma(y)
\end{align}
for all admissible test functions $\Psi \in \rmL^2(\oo; \rmH^1_\mathrm{per}(Y_*))$. We choose $\Psi_\eps$ according to Proposition \ref{thm:per-defect} such that we can control the periodicity defect of $\T \psi$ for arbitrary functions $\psi \in \rmH^1(\oo_\eps)$, namely
\begin{align}
\label{eq:slow-limit-1}
\int_{\oo\times Y_*} \dot{\Theta} \T \psi \dd x\dd y 
&= \int_{\oo\times Y_*} {-} \mathbb{K} \nablaY \Theta \cdot \nablaY\T\psi 
+ \mu \nablaY \Theta \cdot \nablaD u \T\psi \dd x\dd y \nonumber \\
&\quad + \int_{\oo\times\partial T} g \Theta \Tb\psi \dd x \dd\sigma(y) 
+ \Delta^{\Theta}_\mathrm{per},
\end{align}
where the periodicity defect $\Delta^{\Theta}_\mathrm{per}$ is given via 
\begin{align}
\label{eq:slow-delta-per}
\Delta^{\Theta}_\mathrm{per} 
& := \int_{\oo\times Y_*} \dot{\Theta} (\T \psi {-} \Psi_\eps)
- \mathbb{K} \nablaY \Theta \cdot \nablaY (\Psi {-} \T \psi) 
+ \mu \nablaY \Theta \cdot \nablaD u (\Psi_\eps {-} \T\psi) \dd x\dd y \nonumber \\
&\quad  + \int_{\oo\times\partial T} g \Theta (\Psi_\eps {-} \Tb\psi) \dd x \dd\sigma(y).
\end{align}
Applying H\"older's inequality and the embedding \eqref{eq:emb} yield
\begin{align*}
\left\vert \Delta^{\Theta}_\mathrm{per} \right\vert
& \leq (1 + C_\mathrm{emb}) \big\lbrace  
\Vert \dot{\Theta} \Vert_{\rmL^2(Y_*; \Hh)}
+ C_\mathrm{elip}^{-1} \Vert \nablaY \Theta \Vert_{\rmL^2(Y_*; \Hh)}
+ \mu \Vert \nablaY \Theta \cdot \nablaD u \Vert_{\rmL^2(Y_*; \Hh)} \\
&\hspace{200pt} + g \Vert \Theta \Vert_{\rmL^2(\partial T; \Hh)} \big\rbrace
\Vert \T \psi {-} \Psi \Vert_{\rmH^1(Y_*; \Hh^*)} .
\end{align*}
According to the higher $x$-regularity of $\Theta$ in \eqref{eq:sol-bound-limit} it is $\nablaY \Theta \cdot \nablaD u \in \rmL^2(Y_*; \Hh)$ with $\nablaD u \in \mathrm{W}^{1,\infty}(\oo)$ such that we can apply Theorem \ref{thm:per-defect} and obtain 
\begin{align}
\label{eq:slow-limit-2}
\int_0^T |\Delta^{\Theta}_\mathrm{per} | \dd t
\leq C \Vert \T \psi {-} \Psi_\eps \Vert_{\rmH^1(Y_*; \rmH^1(\oo)^*)}
\leq O(\sqrt{\eps}) \Vert \T\psi \Vert_{\rmL^2((0,T)\times\oo; \rmH^1(Y_*))} .
\end{align}

Choosing $\psi := \theta_\eps {-} \G \Theta$ in  \eqref{eq:slow-limit-1} yields
\begin{align}
\label{eq:slow-limit-2}
\int_{\oo\times Y_*} \dot{\Theta} \T (\theta_\eps {-} \G \Theta) \dd x\dd y 
&= \int_{\oo\times Y_*} \big\lbrace {-} \mathbb{K} \nablaY \Theta \cdot \nablaY \T(\theta_\eps {-} \G \Theta) \nonumber \\
&\hspace{50pt} + \mu \nablaY \Theta \cdot \nablaD u \T(\theta_\eps {-} \G \Theta) \big\rbrace \dd x\dd y \nonumber \\
&\quad+ \int_{\oo\times\partial T} g \Theta \Tb(\theta_\eps {-} \G \Theta) \dd x \dd\sigma(y) 
+ \Delta^{\theta_\eps}_\mathrm{per} .
\end{align}
Adding $\pm \Theta$, and respectively $\pm \nablaY\theta$, as in Step 1 gives
\begin{align}
\label{eq:slow-limit-3}
\int_{\oo\times Y_*} \dot{\Theta} (\T\theta_\eps {-} \Theta) \dd x\dd y 
&= \int_{\oo\times Y_*} {-} \mathbb{K} \nablaY \Theta \cdot \nablaY (\T\theta_\eps {-} \Theta)
+ \mu \nablaY \Theta \cdot \nablaD u (\T\theta_\eps {-} \Theta) \dd x\dd y \nonumber \\
&\quad + \int_{\oo\times\partial T} g \Theta (\Tb \theta_\eps {-} \Theta) \dd x \dd\sigma(y) 
+ \Delta^{\Theta}_\mathrm{per,fold},
\end{align}
where the folding mismatch $\Delta^{\Theta}_\mathrm{fold}$ is determined by 
\begin{align}
\label{eq:slow-limit-delta-fold}
\Delta^{\Theta}_\mathrm{fold} 
& := \int_{\oo\times Y_*} \dot{\Theta} (\T\G \Theta {-} \Theta)
- \mathbb{K} \nablaY \Theta \cdot \nablaY (\Theta {-} \T \G \Theta) 
+ \mu \nablaY \Theta \cdot \nablaD u (\Theta {-}\T\G\Theta) \dd x\dd y \nonumber \\
&\quad + \int_{\oo\times\partial T} g \Theta (\Theta {-} \Tb\G\Theta) \dd x \dd\sigma(y)  .
\end{align}
The estimation of $\Delta^{\Theta}_\mathrm{fold}$ follows along the lines of $\Delta^{\theta_\eps}_\mathrm{fold}$ in Step 1 using the boundedness of the limit $(u,\Theta)$, in particular, the boundedness of $\Vert \partial_t \Theta \Vert_{\rmL^2((0,T)\times\oo\times Y_*)}$. 
%Hence it is $|\Delta^{\Theta}_\mathrm{fold}| \leq O(\eps)$. 
Finally,  we insert the test function $\psi := \theta_\eps {-} \G \Theta$ into the $\Delta^\Theta_\mathrm{per}$-estimate in \eqref{eq:slow-limit-2} and apply Young's inequality with $\eta_1 > 0$ 
\begin{align}
\label{eq:slow-gronwall-0}
\int_0^T |\Delta^{\theta_\eps}_\mathrm{fold}| + |\Delta^{\Theta}_\mathrm{per}| + |\Delta^{\Theta}_\mathrm{fold}| \dd t
\leq \eps C_{\eta_1} + \eta_1 \Vert \T\theta_\eps {-} \Theta \Vert^2_{\rmL^2((0,T)\times \oo; \rmH^1(Y_*))} .
\end{align}

\emph{Step 4: Derivation of Gr\"onwall-type estimates.}
Subtracting equation \eqref{eq:slow-limit-3} from \eqref{eq:slow-eps-3} and using $\tfrac12 \tfrac{\dd}{\dd t} \Vert \Psi \Vert^2_{\rmL^2(\oo\times Y_*)} = \int_{\oo\times Y_*} \dot{\Psi} \Psi \dd x\dd y$ gives
\begin{align}
\label{eq:slow-gronwall}
\frac12 \frac{\dd}{\dd t} \Vert \T \theta_\eps {-} \Theta \Vert^2_{\rmL^2(\oo\times Y_*)} 
&= \int_{\oo\times Y_*} \big\lbrace {-} \mathbb{K} [\nablaY (\T\theta_\eps {-} \Theta)] \cdot [\nablaY (\T\theta_\eps {-} \Theta)] \nonumber \\
& \hspace{50pt} + \mu [\nablaY(\T\theta_\eps) \cdot \T(\nablaD u_\eps) - \nablaY\Theta\cdot\nablaD u](\T\theta_\eps {-} \Theta) \big\rbrace \dd x\dd y \nonumber \\
& \quad + \int_{\oo\times\partial T}  g |\Tb\theta_\eps {-} \Theta |^2 \dd x\dd\sigma(y) \nonumber \\
& \quad + \Delta^{\theta_\eps}_\mathrm{fold} - \Delta^{\Theta}_\mathrm{per,fold} .
\end{align}
We continue by estimating each term on the right-hand side in \eqref{eq:slow-gronwall-1} separately. 
Exploiting the interpolation inequality (cf. e.g. \cite{LiM72})
\begin{align*}
\exists\, C_\mathrm{int} > 0, \forall\, \Psi \in \rmL^2(\oo; \rmH^1(Y_*)):\quad 
\Vert \Psi \Vert^2_{\rmL^2(\oo\times\partial T)} 
& \leq C_\mathrm{int} \Vert \Psi \Vert_{\rmL^2(\oo\times Y_*)} 
\Vert \Psi \Vert_{\rmL^2(\oo; \rmH^1(Y_*))}
\end{align*}
and then Young's inequality with $\eta_2 > 0$ lead to  
\begin{align}
\label{eq:slow-gronwall-1}
\Vert \Tb\theta_\eps {-} \Theta\Vert^2_{\rmL^2(\oo\times\partial T)} 
%& \leq C_\mathrm{int} \Vert \T\theta_\eps {-} \Theta \Vert_{\rmL^2(\oo\times Y_*)} 
%\Vert \T\theta_\eps {-} \Theta \Vert_{\rmL^2(\oo; \rmH^1(Y_*))} \nonumber \\
& \leq C_{\eta_2} \Vert \T\theta_\eps {-} \Theta \Vert^2_{\rmL^2(\oo\times Y_*)} 
+ \eta_2 \Vert \T\theta_\eps {-} \Theta \Vert^2_{\rmL^2(\oo; \rmH^1(Y_*))} .
\end{align}
Reformulating the $\mu$-term gives
\begin{align}
&\int_{\oo\times Y_*} \mu [\nablaY(\T\theta_\eps) \cdot \T(\nablaD u_\eps) - \nablaY\Theta\cdot\nablaD u](\T\theta_\eps {-} \Theta) \dd x\dd y \nonumber \\
\label{eq:slow-gronwall-mu-1}
&= \int_{\oo\times Y_*} \mu \nablaY(\T\theta_\eps) \cdot [\T(\nablaD u_\eps) {-} \nablaD u](\T\theta_\eps {-} \Theta)  \dd x\dd y \\
\label{eq:slow-gronwall-mu-2}
&\quad + \int_{\oo\times Y_*} \mu \nablaY(\T\theta_\eps {-} \Theta) \cdot\nablaD u(\T\theta_\eps {-} \Theta)  \dd x\dd y .
\end{align}
Using in \eqref{eq:slow-gronwall-mu-2} that $\Vert \nablaD u\Vert_{\rmL^\infty(\oo\times Y_*)} = \Vert \nablaD u\Vert_{\rmL^\infty(\oo)} \leq C_\delta \Vert u \Vert_{\rmL^2(\oo)}$ is bounded as well as H\"older's and Young's inequality with $\eta_3 > 0$ gives
\begin{align}
\label{eq:slow-gronwall-2}
&\left\vert \int_{\oo\times Y_*} \mu \nablaY(\T\theta_\eps {-} \Theta) \cdot\nablaD u(\T\theta_\eps {-} \Theta)  \dd x\dd y \right\vert \nonumber\\
& \leq \mu C_\delta \Vert u \Vert_{\rmL^2(\oo)} 
\Vert \nablaY(\T\theta_\eps {-} \Theta) \Vert_{\rmL^2(\oo\times Y_*)} 
\Vert \T\theta_\eps {-} \Theta \Vert_{\rmL^2(\oo\times Y_*)} \nonumber\\
&\leq C_{\eta_3} \Vert \T\theta_\eps {-} \Theta \Vert^2_{\rmL^2(\oo\times Y_*)} 
+ \eta_3 \Vert \nablaY(\T\theta_\eps {-} \Theta) \Vert^2_{\rmL^2(\oo\times Y_*)} .
\end{align}
In a similar manner we obtain for \eqref{eq:slow-gronwall-mu-1} by adding $\pm \F(\nablaD u)$ and using estimate \eqref{eq:unfold-nablaD}
\begin{align}
\label{eq:slow-gronwall-3}
&\left\vert \int_{\oo\times Y_*} \mu \nablaY(\T\theta_\eps) \cdot [\T(\nablaD u_\eps) {-} \nablaD u](\T\theta_\eps {-} \Theta) \dd x\dd y \right\vert \nonumber\\
&\leq \mu \Vert \nablaY(\T\theta_\eps) \Vert_{\rmL^2(\oo\times Y_*)}
\Vert \T(\nablaD u_\eps) {-} \nablaD u \Vert_{\rmL^\infty(\oo\times Y_*)}
\Vert \T\theta_\eps {-} \Theta \Vert_{\rmL^2(\oo\times Y_*)} \nonumber\\
& \leq 
C \Vert \T \theta_\eps {-} \Theta \Vert_{\rmL^2(\oo\times Y_*)} 
\left\lbrace \Vert \T (\nablaD u_\eps) {-} \T(\nablaD u) \Vert_{\rmL^\infty(\oo\times Y_*)} + \Vert \T (\nablaD u) {-} \nablaD u \Vert_{\rmL^\infty(\oo\times Y_*)} 
 \right\rbrace \nonumber\\
& \leq C_\delta \Vert \T \theta_\eps {-} \Theta \Vert_{\rmL^2(\oo\times Y_*)} 
\left\lbrace \Vert u_\eps {-} u \Vert_{\rmL^2(\oo_\eps)}
+ \sqrt{\eps} \Vert u \Vert_{\rmL^2(\oo)} \right\rbrace \nonumber\\
& \leq C \left\lbrace
\Vert \T \theta_\eps {-} \Theta \Vert^2_{\rmL^2(\oo\times Y_*)} 
+ \Vert u_\eps {-} u \Vert^2_{\rmL^2(\oo_\eps)}
+ \eps \Vert u \Vert^2_{\rmL^2(\oo)} \right\rbrace .
\end{align}
Overall we can estimate equation \eqref{eq:slow-gronwall} with the uniform ellipticity of $\mathbb{K}$ and \eqref{eq:slow-gronwall-1}--\eqref{eq:slow-gronwall-3} such that
\begin{align*}
\frac12\frac{\dd}{\dd t} \Vert \T \theta_\eps {-} \Theta \Vert^2_{\rmL^2(\oo\times Y_*)} 
&\leq (\eta_1 + \eta_2 + \eta_3 {-} C_\mathrm{elip} ) \Vert \nablaY (\T\theta_\eps {-} \Theta) \Vert^2_{\rmL^2(\oo\times Y_*)}  \\
&\quad + C \left\lbrace \Vert \T \theta_\eps {-} \Theta \Vert^2_{\rmL^2(\oo\times Y_*)}
+ \Vert u_\eps {-} u \Vert^2_{\rmL^2(\oo_\eps)} \right\rbrace \\
&\quad + |\Delta^{\theta_\eps}_\mathrm{fold}| + |\Delta^\Theta_\mathrm{per}| + |\Delta^\Theta_\mathrm{fold}|
+ \eps \Vert u \Vert^2_{\rmL^2(\oo)} .
\end{align*}
Choosing $\eta_i = C_\mathrm{elip}/6$, integrating over $[0,t]$ with $0<t\leq T$, as well as recalling \eqref{eq:norm-diff-eps} and \eqref{eq:slow-gronwall-0} yields 
\begin{align}
\label{eq:slow-gronwall-final}
&\Vert \T \theta_\eps(t) {-} \Theta(t) \Vert^2_{\rmL^2(\oo\times Y_*)} 
+ C_\mathrm{elip} \Vert \nablaY (\T\theta_\eps {-} \Theta) \Vert^2_{\rmL^2((0,t)\times \oo\times Y_*)} \nonumber \\
& \leq C \left\lbrace \Vert \T \theta_\eps {-} \Theta \Vert^2_{\rmL^2((0,t)\times\oo\times Y_*)}
+ \Vert \T u_\eps {-} u \Vert^2_{\rmL^2((0,t)\times\oo\times Y_*)} \right\rbrace \nonumber \\
&\quad + \Vert \T \theta_\eps^0 {-} \Theta^0 \Vert_{\rmL^2(\oo\times Y_*)} + O(\eps) .
\end{align}

\noindent\textbf{Part B: Classical diffusion.}
We point out that the higher regularity of the limit $u \in \rmH^2(\oo)$ implies the higher $x$-regularity of the corrector $U \in \rmH^1(\oo; \rmH^1_\mathrm{per}(Y_*))$ which is the unique minimizer of the unit-cell problem \eqref{eq:unit-cell-prob} with $\xi = \nabla u(x)$. 

\emph{Step 1: Reformulation of $u_\eps$-equation.} The weak formulation of the $u_\eps$-equation is given via
\begin{align*}
\int_{\oo_\eps} \dot{u}_\eps \varphi \dd x 
= \int_{\oo_\eps} - d_\eps \nabla u_\eps \cdot \nabla \varphi + \eps \tau \nabla u_\eps \cdot \nablaD \theta_\eps \varphi + R(u_\eps) \varphi \dd x
+ \int_{\partial T_\eps} \eps (a u_\eps + b v_\eps)\varphi \dd\sigma  
\end{align*}
for all test functions $\varphi \in \rmH^1(\oo_\eps)$. First of all note that the cross-diffusion term 
\begin{align*}
\Delta^{u_\eps}_\mathrm{cross} := \int_{\oo_\eps} \eps \tau \nabla u_\eps \cdot \nablaD \theta_\eps \varphi\dd x
\end{align*}
is of order $O(\eps)$ thanks to H\"older's inequality and the boundedness in \eqref{eq:sol-bound} and \eqref{eq:est-delta}
\begin{align}
\label{eq:class-eps-1}
|\Delta^{u_\eps}_\mathrm{cross}| 
\leq \eps\tau  C_\delta \Vert \nabla u_\eps \Vert_{\rmL^2(\oo_\eps)} \Vert \theta_\eps \Vert_{\rmL^2(\oo_\eps)} \Vert \varphi \Vert_{\rmL^2(\oo_\eps)}. 
\end{align}
Applying the unfolding operators $\T$ and $\Tb$, in particular, rewriting  $\T d_\eps = \mathbb{D}$ and $(\Tb v_\eps)(x,y) = \mathbb{V}(\eps([x/\eps]{+}y),y)$, and using the properties in Lemma \ref{lemma:prop-T} gives
\begin{align}
\label{eq:class-eps-2}
\int_{\oo\times Y_*} \T \dot{u}_\eps \T\varphi \dd x \dd y
& = \int_{\oo\times Y_*} {-} \mathbb{D} \T (\nabla u_\eps) \T(\nabla \varphi) + R(\T u_\eps)\T\varphi \dd x\dd y \nonumber\\
&\quad +\int_{\oo\times \partial T} \left(  a \Tb u_\eps + b \mathbb{V} \right) \Tb \varphi \dd x\dd\sigma (y)  + \Delta^{u_\eps}_\mathrm{cross,app} , 
\end{align}
wherein we replaced the boudnary term $\Tb v_\eps$ with $\mathbb{V}$ and created the approximation error
\begin{align*}
\Delta^{u_\eps}_\mathrm{app} :=
\int_{\oo\times \partial T} b \left(\Tb v_\eps {-} \mathbb{V} \right) \Tb \varphi \dd x\dd\sigma (y).
\end{align*}
Using that $|x {-} \eps([x/\eps] {+} y)| \leq \eps \mathrm{diam}(Y)$ holds for all $(x,y) \in \oo\times \partial T$, we obtain the pointwise estimate $|(\Tb v_\eps)(x,y) {-} \mathbb{V}(x,y)| \leq \eps C \Vert \nabla_x \mathbb{V} \Vert_{\rmL^\infty(\oo)}$ thanks to the Lipschitz continuity of $x \mapsto \mathbb{V}(x,y)$. Together with embedding \eqref{eq:emb} we obtain for the approximation error 
\begin{align}
\label{eq:class-eps-app}
\left\vert \Delta^{u_\eps}_\mathrm{app} \right\vert \leq \eps C \Vert \T \varphi \Vert_{\rmL^2(\oo; \rmH^1(Y_*))}.
\end{align}

We choose the test function $\varphi  =  u_\eps - (u|_{\oo_\eps} {+} \eps \G U) \in \rmH^1(\oo_\eps)$ in \eqref{eq:class-eps-2} such that
\begin{align}
\label{eq:class-eps-3}
&\int_{\oo\times Y_*} \T \dot{u}_\eps (\T u_\eps {-} u) \dd x \dd y \nonumber\\
& = \int_{\oo\times Y_*} {-} \mathbb{D} \T (\nabla u_\eps) [\T(\nabla u_\eps) {-} (\nabla u + \nablaY U)]
+ R(\T u_\eps) (\T u_\eps {-} u) \dd x\dd y \nonumber\\
&\quad  +\int_{\oo\times \partial T} \left(  a \Tb u_\eps + b \mathbb{V} \right) (\Tb u_\eps {-} u) \dd x\dd\sigma (y)  
+ \Delta^{u_\eps}_\mathrm{cross,app,fold}, 
\end{align}
where we added $\pm u$ and $\pm [\nabla u {+} \nablaY U]$, and created the folding mismatch
\begin{align}
\label{eq:class-eps-fold}
\Delta^{u_\eps}_\mathrm{fold} 
& := \int_{\oo\times Y_*} \big\lbrace [R(\T u_\eps) - \T \dot{u}_\eps] (u {-} \T u + \eps\T\G U) \nonumber \\
&\hspace{50pt} {-} \mathbb{D} \T (\nabla u_\eps) [ \nabla u {+} \nablaY U - \T(\nabla (u {+} \eps \G U)) \big\rbrace \dd x\dd y \nonumber\\
&\quad +\int_{\oo\times \partial T} \left(  a \Tb u_\eps + b \mathbb{V} \right) (u {-} \T u + \eps\T\G U) \dd x\dd\sigma (y). 
\end{align}
Exploiting the higher regularity $u\in\rmH^2(\oo)$ we obtain with Lemma \ref{lemma:unfold-err} and Theorem \ref{thm:fold-mismatch}
\begin{align*}
&\Vert \T u {-} u \Vert_{\rmL^2(\oo\times Y_*)} 
+ \Vert \T (\nabla u) {-} \nabla u \Vert_{\rmL^2(\oo\times Y_*)} 
\leq \eps C \Vert u\Vert_{\rmH^2(\oo)} , \\
& \Vert \T [\eps \nabla (\G U)] {-} \nabla_y U \Vert_{\rmL^2(\oo\times Y_*)} \leq \eps C \Vert U \Vert_{\rmH^1(\oo; \rmH^1(Y_*))} .
\end{align*}
The boundary term in \eqref{eq:class-eps-fold} is controlled via
\begin{align}
\label{eq:class-boundary}
&\Vert \T u {-} u \Vert_{\rmL^2(\oo\times \partial T)} 
\leq C_\mathrm{emb} \Vert \T u {-} u \Vert_{\rmL^2(\oo; \rmH^1(Y_*))} \nonumber \\
& =  C_\mathrm{emb} \left( \Vert \T u {-} u \Vert^2_{\rmL^2(\oo\times Y_*)} + \Vert \eps \T(\nabla u) \Vert^2_{\rmL^2(\oo\times Y_*)} \right)^{1/2}
\leq \eps C \Vert u\Vert_{\Hh},
\end{align}
while noting that $\nablaY u = 0$. With this, $\eps \Vert \T\G U\Vert_{\rmL^2(\oo; \rmH^1(Y_*))} \leq \eps 2 \Vert U\Vert_{\rmL^2(\oo; \rmH^1(Y_*))}$ by \eqref{eq:norm-ident}, H\"older's inequality, and the boundedness of $u_\eps$ in \eqref{eq:sol-bound}, we obtain
\begin{align}
\label{eq:class-eps-fold-est}
\int_0^T \left\vert \Delta^{u_\eps}_\mathrm{fold}  \right\vert \dd t \leq O(\eps) .
\end{align}

\emph{Step 2: Reformulation of $u$-equation.} The weak formulation reads 
\begin{align*}
\int_{\oo} \dot{u} \varphi \dd x  = \int_{\oo} - d_{\mathrm{eff}} \nabla u \cdot \nabla \varphi + R(u)\varphi  + \tfrac{|\partial T|}{|Y_*|} (a u + b v_0) \varphi \dd x
\end{align*}
for all $\varphi \in \Hh$ which is equivalent to
\begin{align*}
\int_{\oo\times Y_*} \dot{u} \varphi \dd x\dd y
& = \int_{\oo\times Y_*} {-} \mathbb{D} [\nabla u {+} \nablaY U] \cdot [\nabla \varphi {+} \nabla_y \Phi] +  R(u)\varphi  \dd x \dd y \\
&\quad + \int_{\oo\times\partial T} (a u + b \mathbb{V}) \varphi \dd x\dd\sigma (y) 
\end{align*}
for all test functions $\varphi$ and $\Phi \in \rmL^2(\oo; \rmH^1_\mathrm{per}(Y_*))$. We choose $\Phi_\eps \in \rmL^2(\oo; \rmH^1_\mathrm{per}(Y_*))$ such that we can control the periodicity defect of $\T\varphi$ as in Theorem \ref{thm:per-defect-2}
\begin{align}
\label{eq:class-lim-1}
\int_{\oo\times Y_*} \dot{u} \T\varphi \dd x\dd y
& = \int_{\oo\times Y_*} {-} \mathbb{D} [\nabla u {+} \nablaY U] \cdot \T (\nabla \varphi) +  R(u)\T\varphi  \dd x \dd y \nonumber \\
&\quad + \int_{\oo\times\partial T} (a u + b \mathbb{V}) \Tb\varphi  \dd x\dd\sigma (y) 
+ \Delta^u_\mathrm{per}.
\end{align}
The periodicity defect is given via
\begin{align}
\label{eq:class-lim-per}
\Delta^u_\mathrm{per} 
& := \int_{\oo\times Y_*} [R(u) - \dot{u}] (\varphi {-} \T\varphi) 
{-} \mathbb{D} [\nabla u {+} \nablaY U] \cdot [\nabla \varphi {+} \nablaY \Phi_\eps - \T (\nabla \varphi)]  \dd x \dd y \nonumber \\
&\quad + \int_{\oo\times\partial T} (a u + b \mathbb{V}) (\varphi {-} \Tb\varphi)  \dd x\dd\sigma (y) 
\end{align}
and it is controlled by applying Lemma \ref{lemma:unfold-err}, Theorem \ref{thm:per-defect-2} with $\mathbb{D} [\nabla u {+} \nablaY U] \in \rmH^1(\oo; \rmL^2(Y_*))$, arguing as in \eqref{eq:class-boundary} for the boundary term, and using the boundedness of $u$ in \eqref{eq:sol-bound-limit} via
\begin{align}
\label{eq:class-lim-per-est-0}
\left\vert \Delta^u_\mathrm{per} \right\vert \leq \sqrt {\eps} C \Vert \varphi \Vert_{\Hh} .
\end{align}

Now, we choose $\varphi = \wt u_\eps - (u {+} \eps \widetilde{\G U})$, where $\wt \bullet$ denotes the extension from $\rmH^1(\oo_\eps)$ to $\Hh$ according to Theorem \ref{thm:extension}. Note that the test function belongs to the space $\Hh$ which differs from  Step 1 wherein it belonged to $\rmH^1(\oo_\eps)$. Indeed it holds $\T \wt u_\eps = \T u_\eps$ almost everywhere in $\oo\times Y_*$. Inserting $\varphi$ into \eqref{eq:class-lim-1} and rearranging gives
\begin{align}
\label{eq:class-lim-2}
\int_{\oo\times Y_*} \dot{u} (\T u_\eps {-} u) \dd x\dd y
& = \int_{\oo\times Y_*} \big\lbrace {-} \mathbb{D} [\nabla u {+} \nablaY U] \cdot [\T(\nabla u_\eps) - (\nabla u {+} \nablaY U)] \nonumber \\
&\hspace{140pt} +  R(u)(\T u_\eps {-} u) \big\rbrace \dd x \dd y \nonumber \\
&\quad + \int_{\oo\times\partial T} (a u + b \mathbb{V}) (\Tb u_\eps {-} u)  \dd x\dd\sigma (y) 
+ \Delta^u_\mathrm{per,fold} 
\end{align}
and another folding mismatch
\begin{align}
\Delta^u_\mathrm{fold}
&:= \int_{\oo\times Y_*} \big\lbrace [R(u) - \dot{u}] (u {-} \T u + \eps\G U)  \nonumber \\
&\hspace{60pt} {-} \mathbb{D} [\nabla u {+} \nablaY U] \cdot [\nabla u {+} \nablaY U - \T(\nabla u {+} \eps\nabla \G U)] \big\rbrace \dd x\dd y \nonumber \\
&\quad + \int_{\oo\times\partial T} (a u + b \mathbb{V}) (u {-} \T u + \eps\G U)  \dd x\dd\sigma (y) .
\end{align}
The folding mismatch $\Delta^u_\mathrm{fold}$ has the same form as $\Delta^{u_\eps}_\mathrm{fold}$ in \eqref{eq:class-eps-fold} when replacing $u$ with $\T u_\eps$.

Finally, we control the norm
\begin{align*}
\Vert \varphi \Vert^2_{\Hh} 
& = \Vert \T u_\eps - \T(u {+} \eps\G U) \Vert^2_{\rmL^2(\oo\times Y_*)} 
+ \Vert \T (\nabla u_\eps) - \T( \nabla u {+} \eps\nabla \G U) \Vert^2_{\rmL^2(\oo\times Y_*)} \\
& \leq \Vert \T u_\eps {-} u\Vert^2_{\rmL^2(\oo\times Y_*)} 
+ \Vert \T (\nabla u_\eps) - ( \nabla u {+} \nablaY U) \Vert^2_{\rmL^2(\oo\times Y_*)}  + O(\eps^2)
\end{align*}
by using once more Lemma \ref{lemma:unfold-err} and Theorem \ref{thm:fold-mismatch}. Applying Young's inequality with $\eta_1 >0$ in \eqref{eq:class-lim-per-est-0} yields
\begin{align}
\label{eq:class-lim-per-est}
\left\vert \Delta^u_\mathrm{per} \right\vert 
\leq \eps C_{\eta_1} + \eta_1 \left( \Vert \T u_\eps {-} u\Vert^2_{\rmL^2(\oo\times Y_*)} 
+ \Vert \T (\nabla u_\eps) - ( \nabla u {+} \nablaY U) \Vert^2_{\rmL^2(\oo\times Y_*)} \right).
\end{align}

\emph{Step 3: Derivation of Gr\"onwall-type estimates.}
Subtracting equation \eqref{eq:class-lim-2} from \eqref{eq:class-eps-3} yields
\begin{align*}
\frac12 \frac{\dd}{\dd t} \Vert \T u_\eps {-} u \Vert^2_{\rmL^2(\oo\times Y_*)}
& = \int_{\oo\times Y_*} \big\lbrace {-} \mathbb{D} [\T(\nabla u_\eps) - (\nabla u {+} \nablaY U)] \cdot [\T(\nabla u_\eps) - (\nabla u {+} \nablaY U)] \\
& \hspace{50pt} + [R(\T u_\eps) - R(u)] (\T u_\eps {-} u) \big\rbrace \dd x\dd y \\
& \quad + \int_{\oo\times\partial T} a |\Tb u_\eps {-} u |^2 \dd x\dd\sigma (y) 
 + \Delta^{u_\eps}_\mathrm{cross,app,fold} - \Delta^u_\mathrm{per,fold}.
\end{align*}
Using the uniform ellipticity of $\mathbb{D}$, the Lipschitz continuity of $R$, and the estimations of the periodicity defect in \eqref{eq:class-lim-per-est} gives
\begin{align*}
\frac12 \frac{\dd}{\dd t} \Vert \T u_\eps {-} u \Vert^2_{\rmL^2(\oo\times Y_*)}
& \leq - C_\mathrm{elip} \Vert \T(\nabla u_\eps) - (\nabla u {+} \nablaY U) \Vert^2_{\rmL^2(\oo\times Y_*)} 
+ L \Vert \T u_\eps {-} u \Vert^2_{\rmL^2(\oo\times Y_*)} \\
&\quad + a C_\mathrm{emb} \Vert \T u_\eps {-} u \Vert^2_{\rmL^2(\oo; \rmH^1(Y_*))} 
+ |\Delta^{u_\eps}_\mathrm{cross,app,fold}|+ |\Delta^u_\mathrm{fold}|
+ \eps C_{\eta_1} \\
&\quad 
+ \eta_1 \left( \Vert \T u_\eps {-} u\Vert^2_{\rmL^2(\oo\times Y_*)} 
+ \Vert \T (\nabla u_\eps) - ( \nabla u {+} \nablaY U) \Vert^2_{\rmL^2(\oo\times Y_*)} \right) .
\end{align*}
Choosing $\eta_1 = C_\mathrm{elip}/2$ and integrating over $(0,t)$ with $0<t\leq T$ we get
\begin{align}
\label{eq:class-gronwall-final}
&\Vert \T u_\eps(t) {-} u(t) \Vert^2_{\rmL^2(\oo\times Y_*)}
+ C_\mathrm{elip} \Vert \T(\nabla u_\eps) - (\nabla u {+} \nablaY U) \Vert^2_{\rmL^2((0,t)\times\oo\times Y_*)}  \nonumber \\
&\leq + C \left\lbrace \Vert \T u_\eps {-} u \Vert^2_{\rmL^2((0,t)\times\oo\times Y_*)} + \Vert \T \theta_\eps {-} \Theta \Vert^2_{\rmL^2((0,t)\times\oo\times Y_*)} \right\rbrace \nonumber \\
&\quad + \Vert \T u_\eps^0 {-} u^0 \Vert^2_{\rmL^2(\oo\times Y_*)}
+ O(\eps).
\end{align}

\noindent\textbf{Final step.} 
We add \eqref{eq:slow-gronwall-final} and \eqref{eq:class-gronwall-final} and finally obtain
\begin{align*}
& \Vert \T u_\eps(t) {-} u (t) \Vert^2_{\rmL^2(\oo\times Y_*)}
+\Vert \T \theta_\eps(t) {-} \Theta (t) \Vert^2_{\rmL^2(\oo\times Y_*)} \\
& + C_\mathrm{elip} \left\lbrace \Vert \T (\nabla u_\eps) {-} (\nabla u {+} \nablaY U) \Vert^2_{\rmL^2((0,T)\times\oo\times Y_*)} 
+  \Vert \T (\eps\nabla \theta_\eps) {-} \nablaY \Theta \Vert^2_{\rmL^2((0,T)\times\oo\times Y_*)} \right\rbrace \\
&\leq C \left\lbrace \Vert \T u_\eps {-} u \Vert^2_{\rmL^2((0,T)\times\oo\times Y_*)} + \Vert \T \theta_\eps {-} \Theta \Vert^2_{\rmL^2((0,T)\times\oo\times Y_*)} \right\rbrace + O(\eps) \\
&\quad + \Vert \T u_\eps^0 {-} u^0 \Vert^2_{\rmL^2(\oo\times Y_*)}
+\Vert \T \theta_\eps^0 {-} \Theta^0 \Vert^2_{\rmL^2(\oo\times Y_*)} .
\end{align*}
The application of Gr\"onwall's Lemma and the convergence of the initial values in \eqref{eq:initial-est} complete the proof of \eqref{eq:error-est}. 
\end{proof}

\section{Discussion}
\label{sec:discuss}

Our corrector estimates generalize the qualitative homogenization result obtained in \cite{KAM2014} in two ways: on the one hand we prove quantitative estimates. On the other hand, we consider slow thermal diffusion as well as different scalings $\eps^\alpha$ and $\eps^\beta$ of the cross-diffusion terms.
Under slightly more general assumptions on the data with respect to the $x$-dependence, our estimates imply in particular the rigorous but qualitative homogenization limit for this system.

\medskip
\textbf{What is the limit for arbitrary $\alpha, \beta \geq 0$?} 
For all $\alpha \geq 1$ the limiting $u$-equation remains as it is and the cross-diffusion $\tau\eps^\alpha \nabla u_\eps \cdot \nablaD \theta_\eps$ disappears in the limit $\eps\to0$. For $\alpha = 0$ we have a priori that $\theta_\eps \wto \dashint_{Y_*} \Theta\dd y$ weakly in $\rmL^2(\oo)$ and we expect the additional term $\tau \nabla u \cdot \nablaD \dashint_{Y_*} \Theta\dd y$ in the limit. The choice $\alpha \in (0,1)$ is not meaningful, since the cross-diffusion term is unbounded with $\sim \eps^\alpha$.

For $\beta > 1$ the cross-diffusion term $\mu \eps^\beta \nabla\theta_\eps \cdot \nablaD u_\eps$ vanishes in the limiting $\Theta$-equation and for $\beta < 1$ it diverges with $\eps^{\beta-1}\Vert \eps\nabla \theta_\eps \Vert_{\rmL^2(\oo_\eps)}$. Indeed only the choice $\beta =1$ is meaningful, since it corresponds to the scaling of $\eps^2 \kappa$. 

\medskip
\textbf{Possible generalizations concerning the data.} 
Our analysis allows for not-exactly periodic coefficients such as $d_\eps (x) := \mathbb{D}(x,x/\eps)$ with $\mathbb{D} \in \mathrm{W}^{1,\infty}(\oo; \rmL^\infty(Y_*))$ as in \cite{Rei2016}. The coefficients $\tau_\eps$ and $\mu_\eps$ as well as the reaction term $R_\eps$ may also be not-exactly periodic in the same manner. Moreover all coefficients may additionally depend Lipschitz continuously on time.

The sink/source term $v_\eps$ may be less regular by choosing $v_\eps(t,x) : = [\F \mathbb{V} (t, \cdot,\cdot)](x)$ to capture possible spatial discontinuities in $\mathbb{V} \in \mathrm{C}([0,T]; \rmH^1(\oo; \rmL^2(\partial T)))$. 

On the boundary $\partial T_\eps$ we may consider globally Lipschitz continuous reaction terms $g : \rr \to\rr$. In this case, the boundary term in \eqref{eq:slow-gronwall} is controlled by $L \Vert \Tb \theta_\eps - \Theta \Vert^2_{\rmL^2(\oo\times\partial T)}$, where $L > 0$ denotes the global Lipschitz constant. Non-linear boundary terms may require better initial values to derive the $\rmL^2$-regularity of the time derivatives as in \cite{FMP12,Diss-SR}, however the error estimates hold as they are.

\medskip
\textbf{On the choice of the initial values.} For given $u^0 \in \Hh \cap \rmL^\infty_+(\oo)$ the obvious choice is $u_\eps^0 = u^0|_{\oo_\eps}$ such that the assumption $\Vert \T u_\eps^0 {-} u^0 \Vert_{\rmL^2(\oo\times Y_*)} \leq \sqrt{\eps} C_0$ is satisfied. Perturbations of the form $u_\eps^0 = u^0 {+} \eps V(x,x/\eps)$, which preserve non-negativity, are possible as well. 

In the case of slow diffusion such a direct choice is not possible mainly because  $\theta_\eps^0$ and $\Theta^0$ live in spaces of dimension $d$ and $2d$, respectively. Let $\Theta^0 \in \rmH^1(\oo; \rmH^1_\mathrm{per}(Y_*)) \cap \rmL^\infty_+(\oo\times Y_*)$ be given. One possible choice is $\theta_\eps^0 = \G \Theta^0$, however we are not able to prove $\theta_\eps^0 \in \rmL^\infty_+(\oo_\eps)$ in this case, since  $\rmL^\infty_+(\oo_\eps)$ is not a Hilbert space. Hence, we assume strong differentiability, such as $\Theta^0 \in \mathrm{C}^1(\overline{\oo}; \rmH^1_\mathrm{per}(Y_*) )$ or $\Theta^0 \in \rmH^1(\oo; \mathrm{C}^1_\mathrm{per}(Y_*))$, so that $\theta_\eps^0 = \Theta^0(x,x/\eps)$ is well-defined in $\rmH^1(\oo_\eps) \cap \rmL^\infty_+(\oo)$.

\appendix
\section{Properties of periodic unfolding} 
\label{app:AppendixA}

We recall elementary properties for the periodic unfolding operator $\T$ and the boundary unfolding operator $\Tb$ as well as extensions operators.

\begin{lemma}
\label{lemma:prop-T}
Let $1\leq p, q \leq \infty$ with $1/p + 1/q \leq 1$. 
\begin{compactenum}
\item The operators $\T$ and $\Tb$ are linear and bounded.
\item The product rule 
\begin{align}
\label{eq:product-rule}
\T(uv) = (\T u)(\T v) 
\quad\text{and}\quad
\Tb(uv) = (\Tb u)(\Tb v) 
\end{align}
holds for all $u\in\rmL^{p}(\oo_\eps)$, $v \in\rmL^{q}(\oo_\eps)$ and $u\in\rmL^{p}(\partial T_\eps)$, $v \in\rmL^{q}(\partial T_\eps)$, respectively.
\item The norms are preserved via 
\begin{align}
\label{eq:norm-pres}
\Vert \T u\Vert_{\rmL^p(\oo\times Y_*)} = \Vert u \Vert_{\rmL^p(\oo_\eps)}
\quad\text{and}\quad
\Vert \T u\Vert_{\rmL^p(\oo\times\partial T)} = \sqrt{\eps} \Vert u \Vert_{\rmL^p(\partial T_\eps)} 
\end{align}
for all $u \in \rmL^p(\oo_\eps)$ and $u \in \rmL^p(\partial T_\eps)$, respectively.
\item One has the integration formulas
\begin{align}
\label{eq:int-formula}
\int_{\oo_\eps} u \dd x = \int_{\oo\times Y_*} \T u \dd x\dd y 
\quad\text{and}\quad
\eps \int_{\partial T_\eps} u \dd \sigma(x) = \int_{\oo\times \partial T} \Tb u \dd x\dd\sigma(y) 
\end{align}
for all $u \in \rmL^1(\oo_\eps)$ and $u \in \rmL^1(\partial T_\eps) $, respectively.
\item If $u \in \rmH^1(\oo_\eps)$, then it is $\T u \in \rmL^2(\oo; \rmH^1(Y_*))$ with $\T(\eps\nabla u) = \nablaY (\T u)$.
\item For all $u \in \rmL^p(\oo_\eps)$ it holds $\T[R(u)] = R(\T u)$ where $R: \rr \to \rr$ is an arbitrary function.
\end{compactenum}
\end{lemma}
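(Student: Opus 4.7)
Every assertion reduces to unwinding the definition $(\T u)(x,y) = u(\eps[x/\eps] + \eps y)$ (and likewise for $\Tb$) and using the cell partition $\oo = \bigsqcup_{\xi \in N_\eps} \eps(\xi + Y)$ which, by Assumption \ref{assump:domain} and Remark \ref{rem:boundary}, has no truncated cells at $\partial\oo$. On the interior of each cell, $[x/\eps] = \xi$ is constant, so $(\T u)(x,y) = u(\eps \xi + \eps y)$ is a tensor product structure (piecewise-constant in $x$, rescaled translate in $y$). The ambiguity of $[x/\eps]$ on cell interfaces is a Lebesgue-null set and plays no role.

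First, items (i), (ii), and (vi) are immediate from this pointwise formula: pre-composition with a fixed measurable map commutes with taking linear combinations, with pointwise products, and with post-composition by any $R:\rr\to\rr$. The boundedness claim in (i) is the content of (iii), so I postpone it.

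Next, for the norm preservation (iii) and the integration formula (iv) for $\T$, I compute
\[
\int_{\oo\times Y_*} |\T u|^p \dd x \dd y = \sum_{\xi \in N_\eps} \int_{\eps(\xi + Y)} \int_{Y_*} |u(\eps \xi + \eps y)|^p \dd y \dd x = \sum_{\xi \in N_\eps} \eps^d \int_{Y_*} |u(\eps \xi + \eps y)|^p \dd y,
\]
and then apply the change of variables $z = \eps(\xi + y)$ on each cell (Jacobian $\eps^d$) to obtain $\sum_{\xi} \int_{\eps(\xi + Y_*)} |u(z)|^p \dd z = \int_{\oo_\eps} |u|^p \dd z$. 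The case $p=1$ gives the volume integration formula. For the boundary operator $\Tb$ the same computation applies, but the $(d{-}1)$-dimensional surface substitution $z = \eps(\xi+y)$ on $\partial T$ has Jacobian $\eps^{d-1}$, so combining the factor $\eps^d$ from the $x$-integration with the factor $\eps^{1-d}$ from the surface substitution produces exactly one extra power of $\eps$; this yields $\Vert \Tb u\Vert^p_{\rmL^p(\oo\times\partial T)} = \eps \Vert u\Vert^p_{\rmL^p(\partial T_\eps)}$ (consistent with the $\sqrt{\eps}$ in the stated identity at $p=2$) and $\int_{\oo\times\partial T} \Tb u \dd x\dd\sigma(y) = \eps \int_{\partial T_\eps} u \dd\sigma(x)$. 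Boundedness in (i) then follows.

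Finally, for item (v), on a single cell the map $y \mapsto u(\eps\xi + \eps y)$ is a translation and rescaling of $u|_{\eps(\xi+Y_*)}$, hence lies in $\rmH^1(Y_*)$ with weak derivative given by the chain rule $\nablaY [u(\eps\xi{+}\eps y)] = \eps\, \nabla u(\eps\xi{+}\eps y)$. Assembling over cells gives the pointwise a.e.\ identity $\nablaY (\T u)(x,y) = \eps (\T \nabla u)(x,y) = [\T(\eps \nabla u)](x,y)$ in $\rmL^2(\oo\times Y_*)$, and applying (iii) to $\eps \nabla u$ yields the membership $\T u \in \rmL^2(\oo; \rmH^1(Y_*))$. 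Since the $y$-derivative is taken cell-wise and the $x$-variable is merely a parameter selecting the cell, no matching conditions across internal cell interfaces arise, and this is the only spot where I need to be slightly careful about the piecewise structure; it is bookkeeping rather than a real obstacle.
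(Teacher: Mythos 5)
Your proof is correct. The paper itself does not carry out these computations: it disposes of items 1--5 by citing \cite[Prop.~2.5 \& 5.2]{CDZ06} and only writes out the one-line identity for item 6, so your self-contained cell-by-cell change-of-variables argument is precisely the standard proof behind that citation rather than a genuinely different route. One small point in your favour: your computation correctly yields the factor $\eps^{1/p}$ in the boundary norm identity, of which the $\sqrt{\eps}$ in the statement is the $p=2$ instance (the only case used in the paper); you are right to flag that the general-$p$ constant is $\eps^{1/p}$, not $\sqrt{\eps}$.
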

\begin{proof}
Assertions 1.-5.\ follow from \cite[Prop.~2.5 \& 5.2]{CDZ06} and 6.\ from $[\T R(u)] (x,y) = R[u(\eps[x/\eps] {+} \eps y)] = [T(\T u)](x,y)$ for all $(x,y) \in \oo\times Y_*$.
\end{proof}

\begin{theorem}[{\cite{HB2014}}]
\label{thm:extension}
Under the Assumptions \ref{assump:domain} on the domain there exists a family of linear operators $\mathcal{L}_\eps : \rmH^1(\oo_\eps) \to \Hh$ such that for every $u \in \rmH^1(\oo_\eps)$ it holds
\begin{align*}
(\mathcal{L}_\eps u )|_{\oo_\eps} = u
\quad\text{and}\quad
\Vert \mathcal{L}_\eps u \Vert_{\Hh} \leq C \Vert u \Vert_{\rmH^1(\oo_\eps)} ,
\end{align*}
where $C>0$ only depends on the domains $\oo$, $Y$, and $T$. 
\end{theorem}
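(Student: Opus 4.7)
The strategy is to reduce to a reference-cell problem and then paste cellwise. By Remark \ref{rem:boundary}, the rescaled cells $\eps(\xi+Y)$ with $\xi\in N_\eps$ tile $\oo$ exactly, so once one has a bounded extension operator on the reference cell together with an appropriate cross-face compatibility, the global operator $\mathcal{L}_\eps$ on $\oo$ is obtained by cellwise scaling, and its norm bound reduces to a single scaling identity.

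By Assumption \ref{assump:domain}(ii), $Y_*$ is a connected Lipschitz subdomain of $Y$ with $\partial Y_*\cap\partial Y$ identical on all opposite faces. Classical extension theory (Calder\'on--Stein, or the perforated-domain construction of Acerbi--Chiad\`o-Piat--Dal Maso--Percivale as refined in \cite{HB2014}) yields a bounded linear operator $P:\rmH^1(Y_*)\to\rmH^1(Y)$ with $(P\phi)|_{Y_*}=\phi$ and $\Vert P\phi\Vert_{\rmH^1(Y)}\le C_P\Vert\phi\Vert_{\rmH^1(Y_*)}$, where $C_P$ depends only on $Y$ and $T$. The construction can moreover be arranged so that the trace $P\phi|_{\partial Y}$ depends only on $\phi|_{\partial Y_*\cap\partial Y}$ and its values on opposite faces $\Gamma_i,\Gamma_{-i}$ coincide whenever those of $\phi$ do; this trace-periodicity is what permits pasting. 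For $u\in\rmH^1(\oo_\eps)$ and $\xi\in N_\eps$ I set $\phi_\xi(y):=u(\eps(\xi+y))\in\rmH^1(Y_*)$ and define
\[
(\mathcal{L}_\eps u)(x):=\bigl(P\phi_\xi\bigr)\!\bigl((x-\eps\xi)/\eps\bigr),\qquad x\in\eps(\xi+Y).
\]
Linearity is immediate and $(\mathcal{L}_\eps u)|_{\oo_\eps}=u$ follows from $(P\phi)|_{Y_*}=\phi$. To verify $\mathcal{L}_\eps u\in\Hh$, one checks trace equality on every shared face $F$ between two neighbouring cells: on $F\cap\oo_\eps$ the traces from both sides agree because $u$ is $\rmH^1$ there and $P$ is an extension, while on $F\cap T_\eps$ they agree by the trace-periodicity of $P$ together with the compatibility of the neighbouring pieces of $u|_{\partial Y_*\cap\partial Y}$ supplied by $u\in\rmH^1(\oo_\eps)$.

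The norm bound follows by rescaling. On each cell $\eps(\xi+Y)$ the change of variables $x=\eps(\xi+y)$ gives $\Vert\mathcal{L}_\eps u\Vert_{\rmL^2(\eps(\xi+Y))}^2=\eps^d\Vert P\phi_\xi\Vert_{\rmL^2(Y)}^2$ and $\Vert\nabla\mathcal{L}_\eps u\Vert_{\rmL^2(\eps(\xi+Y))}^2=\eps^{d-2}\Vert\nabla_y P\phi_\xi\Vert_{\rmL^2(Y)}^2$, with the analogous identities for $u$ on $\eps(\xi+Y_*)$. Applying the reference-cell bound $\Vert P\phi_\xi\Vert_{\rmH^1(Y)}\le C_P\Vert\phi_\xi\Vert_{\rmH^1(Y_*)}$ cellwise, the $\eps$-weights assemble correctly on both sides and summation over $\xi\in N_\eps$ produces $\Vert\mathcal{L}_\eps u\Vert_{\Hh}\le C\Vert u\Vert_{\rmH^1(\oo_\eps)}$ with $C$ depending only on $\oo$, $Y$, and $T$.

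The main obstacle is the pasting step: the cellwise pieces must match on every internal cell interface of the tiling, including those portions lying inside the perforation $T_\eps$ where $u$ itself has no trace. Ensuring this is precisely the role of the opposite-face symmetry of $\partial Y_*\cap\partial Y$ in Assumption \ref{assump:domain}(ii) and of the refined construction in \cite{HB2014}; the remaining ingredients are a standard Lipschitz-domain extension theorem and pure rescaling.
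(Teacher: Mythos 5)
The paper does not prove this statement at all: it is imported verbatim from \cite{HB2014}, so there is no internal proof to compare against. Judged on its own terms, your cell-by-cell construction has two genuine gaps.

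First, the pasting step is not merely ``the main obstacle'' --- it is the entire content of the theorem, and you assume it away. You postulate a reference-cell extension $P:\rmH^1(Y_*)\to\rmH^1(Y)$ whose trace on each face $\Gamma_i$ depends only on $\phi|_{\Gamma_i\cap\partial Y_*}$ and does so identically on opposite faces. No classical Calder\'on--Stein extension has this locality: near a point of $\partial Y\cap\overline{T}$ the extension is built by local reflection across $\partial Y_*$ and its boundary values depend on $\phi$ in a full neighbourhood inside $Y_*$, not just on the trace along that face. For isolated holes ($\overline{T}\subset Y^\circ$) the issue disappears because $P\phi=\phi$ near $\partial Y$, but the paper explicitly includes the pipe geometry of Figure 1(ii), where cell interfaces cut through $T_\eps$; that is precisely the case your argument does not handle. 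The constructions in the literature (Acerbi et al., and \cite{HB2014}) circumvent this by extending on an enlarged reference configuration (a cell together with its neighbours) and gluing with a partition of unity, rather than by a strictly single-cell pasting; reproducing that, or actually constructing a $P$ with your locality property, is the missing idea.

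Second, your scaling computation does not close. From $\Vert\nabla\mathcal{L}_\eps u\Vert^2_{\rmL^2(\eps(\xi+Y))}=\eps^{d-2}\Vert\nabla_y P\phi_\xi\Vert^2_{\rmL^2(Y)}$ and only the full-norm bound $\Vert P\phi\Vert_{\rmH^1(Y)}\le C_P\Vert\phi\Vert_{\rmH^1(Y_*)}$, you obtain
\begin{align*}
\Vert\nabla\mathcal{L}_\eps u\Vert^2_{\rmL^2(\eps(\xi+Y))}
\leq C\left(\eps^{-2}\Vert u\Vert^2_{\rmL^2(\eps(\xi+Y_*))}+\Vert\nabla u\Vert^2_{\rmL^2(\eps(\xi+Y_*))}\right),
\end{align*}
so the operator norm degenerates like $\eps^{-1}$ rather than staying bounded. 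To get an $\eps$-uniform constant you need the separate gradient estimate $\Vert\nabla_y P\phi\Vert_{\rmL^2(Y)}\le C\Vert\nabla_y\phi\Vert_{\rmL^2(Y_*)}$, obtained by the standard correction $P\phi:=m+P_0(\phi-m)$ with $m=\dashint_{Y_*}\phi\dd y$ and the Poincar\'e--Wirtinger inequality on $Y_*$ --- which is exactly where the connectedness of $Y_*$ in Assumption \ref{assump:domain}(ii) is used, a point absent from your argument.
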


\section{Proof of the folding mismatch} 
\label{app:AppendixB}

The proof of Proposition \ref{thm:fold-mismatch} for the folding mismatch follows \cite{Diss-SR, Rei2016} and is adapted to perforated domains. 
We define the scale-splitting operator $\Q$ by $\mathcal{Q}_1$-Lagrangrian interpolants, as customary in finite element methods (FEM), following \cite[Sec.~3]{CDZ06}. By the Sobolev extension theorem, there exists for every $w \in \rmH^1(\oo)$ and $U \in \rmH^1(\oo; \rmH^1_\mathrm{per}(Y_*))$ a function $\wt w \in \rmH^1(\rr^d)$ and $\wt U \in \rmH^1(\rr^d; \rmH^1_\mathrm{per}(Y_*))$, respectively, such that it holds
\begin{align*}
\Vert \wt w \Vert_{\rmH^1(\rr^d)} \leq C \Vert w \Vert_{\rmH^1(\oo)}
\quad\text{and}\quad
\Vert \wt U \Vert_{\rmH^1(\rr^d; \rmH^1_\mathrm{per}(Y_*))} \leq C \Vert U \Vert_{\rmH^1(\oo; \rmH^1_\mathrm{per}(Y_*))} ,
\end{align*}
where $C>0$ only depends on the domain $\oo$. Then $\Q : \rmH^1(\rr^d) \to \mathrm{W}^{1,\infty}(\oo_\eps)$ is given via:

\begin{compactitem}
\item For every node $\eps \xi_k \in \eps \mathbb{Z}^d$ we define
\begin{align*}
(\Q \wt w)(\eps \xi_k) := \dashint_{\eps(\xi_k + Y_*)} \wt w(z) \dd z ,
\end{align*}
(Note that this definition is slightly different than in \cite{CDZ06}. Therein the average is taken over balls $B_\eps$ centered at $\eps \xi_k$ and not touching the pores $T_\eps$. The present definition has the advantage that the equality $(\F \wt w)(\eps \xi_k) = (\Q \wt w)(\eps \xi_k)$ holds for all nodes.)
\item We define $\Q^* w$ on the whole $\rr^d$ by interpolating the nodal values $(\Q \wt w)(\eps \xi_k)$ with $\mathcal{Q}_1$-Lagrangian interpolants yielding polynomials of degree $d$, for more details see \cite[Def.~4.1]{CDG08} or \cite[Def.~2.3.6]{Diss-SR}.
\item On $\oo_\eps$, we set $\Q \wt w:= (\Q^* \wt w)|_{\oo_\eps}$.
\end{compactitem}

For given two-scale functions $U(x,y) = w(x) z(y)$ of product form, we can now construct approximating sequences in $\rmH^1(\oo_\eps)$ via $u_\eps(x) = (\Q \wt w)(x) z(x/\eps)$ and require only the minimal regularity $w \in \rmH^1(\oo)$ and $z \in \rmH^1_\mathrm{per}(Y_*)$. 
According to \cite[Prop.~3.1]{CDZ06} the macroscopic interpolants satisfy
\begin{align}
\label{eq:est-Q}
\Vert \Q \wt w \Vert_{\rmH^1(\oo_\eps)} \leq C \Vert w \Vert_{\rmH^1(\oo)} ,
\end{align}
where $C>0$ only depends on $\oo$ and $Y_*$. Furthermore, we can control the difference between $\F$ and $\Q$ by:
\begin{lemma}
For $w \in \rmH^1(\oo)$ and $z \in \rmL^2(Y_*)$ it holds
\begin{align}
\label{eq:interpol-est}
\Vert ( \F w - \Q \wt w ) z(\tfrac{\cdot}{\eps}) \Vert_{\rmL^2(\oo_\eps)} 
\leq \eps C \Vert w \Vert_{\rmH^1(\oo)} \Vert z \Vert_{\rmL^2(Y_*)} ,
\end{align}
where $C>0$ only depends on $\oo$ and $Y_*$.
\end{lemma}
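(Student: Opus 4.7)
The plan is to split the integral over $\oo_\eps$ into contributions from the individual scaled cells $\eps(\xi + Y_*)$, $\xi \in N_\eps$, and on each cell to exploit that $\F w$ reduces to the constant $c_\xi := \dashint_{\eps(\xi+Y_*)} \wt w \dd z$, whereas $\Q \wt w$ is the $\mathcal{Q}_1$-Lagrangian interpolant of the neighbouring nodal averages $c_{\xi_k}$, $k=1,\ldots,2^d$, at the vertices of the cell. Writing $N_k$ for the associated basis functions (which satisfy $\sum_k N_k(x) = 1$ and $0 \leq N_k \leq 1$ on $\eps(\xi+Y_*)$), I would first record the identity
\begin{align*}
(\F w - \Q \wt w)(x) = \sum_{k=1}^{2^d} N_k(x) (c_\xi - c_{\xi_k}), \quad x \in \eps(\xi+Y_*).
\end{align*}

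Next, I would bound each difference $|c_\xi - c_{\xi_k}|$. Since both quantities are averages of $\wt w$ over perforated cells sharing the vertex $\eps \xi_k$, the Poincar\'e--Wirtinger inequality applied on a fixed connected Lipschitz enlargement $\eps(\xi + Y_0)$ containing $\eps(\xi_k + Y_*) \cup \eps(\xi + Y_*)$ (whose existence is guaranteed by Assumption \ref{assump:domain}(ii) together with Remark \ref{rem:boundary}) yields
\begin{align*}
|c_\xi - c_{\xi_k}|^2 \leq C \eps^{2-d} \Vert \nabla \wt w \Vert_{\rmL^2(\eps(\xi+Y_0))}^2.
\end{align*}
Plugging this into the above identity and using the uniform bound on the $N_k$, I would obtain the pointwise estimate
\begin{align*}
|(\F w - \Q \wt w)(x)| \leq C \eps^{1-d/2} \Vert \nabla \wt w \Vert_{\rmL^2(\eps(\xi+Y_0))} \quad \text{for } x \in \eps(\xi+Y_*).
\end{align*}

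The third step is to multiply by $|z(x/\eps)|^2$, integrate over $\eps(\xi + Y_*)$, and apply the change of variables $y = x/\eps$ to compute $\int_{\eps(\xi+Y_*)} |z(x/\eps)|^2 \dd x = \eps^d \Vert z \Vert_{\rmL^2(Y_*)}^2$. Combining with the pointwise bound, the cell-wise contribution becomes
\begin{align*}
\int_{\eps(\xi+Y_*)} |\F w - \Q \wt w|^2 |z(\tfrac{x}{\eps})|^2 \dd x \leq C \eps^2 \Vert \nabla \wt w \Vert_{\rmL^2(\eps(\xi+Y_0))}^2 \Vert z \Vert_{\rmL^2(Y_*)}^2.
\end{align*}
Summing over $\xi \in N_\eps$ and using that the enlarged patches $\eps(\xi + Y_0)$ have uniformly bounded overlap (say, at most $3^d$), the total is controlled by $C \eps^2 \Vert \nabla \wt w \Vert_{\rmL^2(\rr^d)}^2 \Vert z \Vert_{\rmL^2(Y_*)}^2$, and the Sobolev extension bound $\Vert \wt w \Vert_{\rmH^1(\rr^d)} \leq C \Vert w \Vert_{\rmH^1(\oo)}$ together with taking a square root yields the claimed estimate.

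The hard part will be the Poincar\'e--Wirtinger estimate on the enlarged patch: the patch $Y_0$ must be chosen once and for all so that the union of the perforated cells meeting at a common vertex is connected and Lipschitz uniformly in $\eps$, which is precisely where Assumption \ref{assump:domain}(ii) on the connectedness of $Y_*$ and the matching of its traces on opposite faces of $Y$ enters. Once this geometry is fixed, the $\eps^{1-d/2}$ scaling in the pointwise bound is dictated purely by the homogeneity of the Poincar\'e--Wirtinger inequality under the rescaling $x \mapsto \eps(\xi + y)$, and the rest of the argument reduces to a routine summation in combination with the extension estimate.
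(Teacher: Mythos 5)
Your argument is correct and is essentially the standard proof that the paper defers to (its own ``proof'' is a one-line citation of Lemma~2.3.7 in the cited thesis, adapted from $Y$ to $Y_*$): cell-wise reduction of $\F w - \Q\wt w$ to differences of neighbouring nodal averages via the partition of unity, a scaled Poincar\'e--Wirtinger inequality on a fixed connected patch of perforated cells giving the $\eps^{1-d/2}$ bound, and summation with bounded overlap plus the extension estimate. You also correctly identify the one genuinely geometric point -- connectedness of the union of adjacent perforated cells, guaranteed by Assumption~\ref{assump:domain}(ii) -- so nothing further is needed.
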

\begin{proof}
The proof follows along the lines of \cite[Lem.~2.3.7]{Diss-SR} by replacing $Y$ with $Y_*$. 
\end{proof}

Having collected all necessary ingredients, we can now handle the folding mismatch.
\begin{proof}[Proof of Theorem \ref{thm:fold-mismatch}]
The proof follows along the lines of \cite[Thm.~3.4]{Rei2016} by replacing $Y$ with $Y_*$. In a first step estimate \eqref{eq:fold-est} is derived for $U(x,y) = w(x)z(y)$ using the ``folded function'' $\vartheta_\eps(x) = (\Q \wt w)(x)z(x/\eps)$ and the estimates \eqref{eq:est-Q}--\eqref{eq:interpol-est}. In a second step this result is generalized to arbitrary two-scale functions $U(x,y)$ by exploiting the tensor product structure of the space $\rmH^1(\oo;\rmH^1_\mathrm{per}(Y_*))$ and expressing $U(x,y) = \sum_{i=1}^\infty u_i(x) \Phi_i(y)$ in terms of an orthonormal basis $\lbrace \Phi_i \rbrace_{i=1}^\infty \subset \rmH^1_\mathrm{per}(Y_*)$ with $u_i(x) = \int_{Y_*} U(x,y) \Phi_i(y) \dd y$.
\end{proof}

%\textbf{Acknowledgments.} The research of S.R.\ was supported by \emph{Deutsche Forschungsgesellschaft} within  
%\emph{Collaborative Research Center 910: Control of self-organizing nonlinear systems: Theoretical methods and concepts of application}   
% via the project \emph{A5: Pattern formation in systems with multiple scales}.   A.M. thanks {\em NWO MPE ``Theoretical estimates of heat losses in geothermal wells'' (grant nr. 657.014.004)} for funding.

\footnotesize
%\bibliographystyle{my_alpha}
%\bibliography{Bib_Sina} 

\end{document}